\theoremstyle{plain}
\newtheorem{theorem}{Theorem}[section]
\newtheorem{lemma}[theorem]{Lemma}
\newtheorem{corollary}[theorem]{Corollary}
\newtheorem{claim}[theorem]{Claim}
\newtheorem{question}[theorem]{Question}
\newtheorem*{claim*}{Claim}
\newtheorem*{theorem*}{Theorem}
\newtheorem*{lemma*}{Lemma}
\newtheorem*{proposition*}{Proposition}
\newtheorem*{corollary*}{Corollary}
\newtheorem*{subclaim*}{Subclaim}
\theoremstyle{definition}
\newtheorem{definition}[theorem]{Definition}
\newtheorem*{definition*}{Definition}
\newtheorem*{example*}{Example}
\newtheorem*{fact*}{Fact} 
\theoremstyle{remark}
\newtheorem{remark}[theorem]{Remark}
\newenvironment{eqpar*}{\begin{equation*}\begin{minipage}{0.8\columnwidth}}%
{\end{minipage}\end{equation*}}
\newcommand{\nsparam}{\tilde\beta}
\DeclareMathOperator{\pr}{pr}
\DeclareMathOperator{\est}{\hat\mu}
\DeclareMathOperator{\normaldist}{\mathcal N}
\DeclareMathOperator{\invgamma}{inv-gamma}
\DeclareMathOperator{\gammadist}{gamma}
\DeclareMathOperator{\gdest}{\hat{\mu}_{\text GD }}
\DeclareMathOperator{\expectation}{\mathbb{E}}
\DeclareMathOperator{\bias}{Bias}
\DeclareMathOperator{\var}{Var}
\newcommand{\dirac}[1]{\operatorname{Dirac}_{#1}}
\DeclareMathOperator{\prob}{\mathcal M}
\newcommand{\expected}[1]{\operatorname{\mathbb{E}}_{#1}}
\newcommand{\parametersp}{\Theta}
\newcommand{\samplesp}{\mathbb X}
\newcommand{\actionsp}{\mathbb A}
\newcommand{\lossf}{\ell}
\newcommand{\model}{P}
\newcommand{\decisions}{\mathcal D}
\newcommand{\priors}{\mathcal R}
\newcommand{\riskf}{r}
\newcommand{\aproblem}{e}
\newcommand{\sconvexhull}[1]{{#1}^{(\starmap{}\text{C.H.})}}
\newcommand{\decisionproblem}{(\parametersp,\actionsp,\lossf,\samplesp,\model,\decisions,\priors)}
\newcommand{\starmap}[1]{{}^*{#1}}
\newcommand{\sigmamap}[1]{{}^\sigma{#1}}
\newcommand{\ST}{{}^{\circ}}
\newcommand{\bsim}[1]{\mathbin{\sim_{#1}}}
\newcommand{\nbsim}[1]{\mathbin{\not\sim_{#1}}}
\newcommand{\rsim}[1]{\bsim[r]}
\newcommand{\nrsim}[1]{\nbsim[r]}
\newcommand{\NS}[1]{\operatorname{NS}({#1})}
\DeclareMathOperator{\powerset}{\mathcal{P}}
\DeclareMathOperator{\nat}{\mathbb{N}}
\providecommand{\int}{\mathbb{Z}}
\providecommand{\reals}{\mathbb{R}}
\DeclareMathOperator{\dom}{dom}
\providecommand{\setdef}{\;|\;}
\providecommand{\supp}{\mathrm{ supp }}
\author[Duanmu]{Haosui Duanmu}
\address{Haosui Duanmu, 
Harbin Institute of Technology, China
}
\email{duanmuhaosui@hotmail.com}
\author[Roy]{Daniel M. Roy}
\address{Daniel M. Roy, University of Toronto, Toronto, Canada}
\email{daniel.roy@utoronto.ca}
\author[Schrittesser]{David Schrittesser}
\address{David Schrittesser, University of Toronto, Toronto, Canada}
\email{david@logic.univie.ac.at}
\title%
{Admissibility is Bayes optimality with infinitesimals}
\subjclass[2010]{62C15, 62C07, 62C10, 62A01, secondary 26E35}
\keywords{}
\begin{document}

\begin{abstract}
We give an exact characterization of admissibility in statistical decision problems 
in terms of Bayes optimality in a so-called nonstandard extension of the original decision problem, 
as introduced by Duanmu and Roy.
Unlike the consideration of improper priors or other generalized notions of Bayes optimalitiy,
the nonstandard extension is distinguished, in part, by having priors that can assign ``infinitesimal'' mass in a sense that can be made rigorous using results from nonstandard analysis.
With these additional priors, we 
find that, informally speaking,
a decision procedure $\delta_0$ is admissible in the original statistical decision problem
if and only if, in the nonstandard extension of the problem, 
the nonstandard extension of $\delta_0$ is Bayes optimal among the extensions of standard decision procedures
with respect to a nonstandard prior 
that assigns at least infinitesimal mass to every standard parameter value.
We use the above theorem to give further characterizations of admissibility, one related to Blyth's method,  one to 
a condition due to Stein which characterizes admissibility under some regularity assumptions; and finally, a characterization using finitely additive priors in decision problems meeting certain regularity requirements.
Our results imply that Blyth's method is a sound and complete method for establishing admissibility. 
Buoyed by this result, we revisit the univariate two-sample common-mean problem, and 
show that the Graybill--Deal estimator is admissible among a certain class of unbiased decision procedures.
\end{abstract}

\maketitle

\tableofcontents

\section{Introduction}

In his work introducing statistical decision theory,
Wald formalized various notions of optimality in
terms of partial and total orders on the class of  all decision procedures.
An admittedly weak optimality criterion was captured by a strict partial order on decision procedures, wherein one decision procedure \emph{dominates} another if its 
risk is no larger at \emph{every} parameter value and strictly smaller at \emph{some} parameter value.
The minimal elements---those not dominated by any other procedure---were deemed to be \emph{admissible} and a subclass of procedures was called complete if every procedure not in the class was dominated by a procedure within it.  
Wald also defined total orders in terms of the worst-case (minimax) risk and average case (Bayes) risk, where the average is taken with respect to some ``prior'', i.e., probability measure on the set of parameters.

Some of the key results from statistical decision theory pertain to relationships between these various notions of optimality
and prescriptions for choosing estimators in terms of characterizations of complete classes.
Among Wald's numerous results, the most paradigmatic are perhaps his complete class theorems 
establishing the completeness of the class of Bayes decision procedures, i.e., the minimal elements with respect to Bayes risk for \emph{some} prior probability measure. See, e.g., Theorems~4.11 and 4.14 in \cite{wald1947} for early examples.
By these theorem, in decision problems satisfying strong assumptions regarding compactness and continuity of various of its elements, 
every admissible procedure is Bayes. 
Examples show this implication does not hold without strong assumptions. 
Indeed, once you drop compactness or continuity, one can find problems with admissible procedures that are not Bayes.
In order to reestablish a link between admissibility and Bayes optimality in more general problems,
generalizations of Bayes optimality were introduced by numerous authors, 
based on ``improper'' priors, sequences of priors, or limits of sequences of Bayes procedures.
In each case, however, strong regularity conditions are exploited, raising the prospect that the relationship between admissibility and Bayes optimality is somewhat fragile.

Despite a frenzy of work in the decades following Wald's seminal work and isolated breakthroughs over the past seventy years,
our understanding of the relationship between admissibility and Bayes optimality is still incomplete.
We should, however, be uncomfortable with our incomplete understanding,
especially since this long standing problem seems to have been abandoned mostly out of exhaustion.
While considerations of minimaxity or minimax rates tend to dominate contemporary statistical theory,
there is more to learn about admissibility and Bayes optimality, and new understanding may 
cast light on new problems, such as understanding adaptivity.

\medskip

While a complete class theorem may imply that one can restrict attention to a certain class of Bayes procedures,
a Bayes procedure may still be inadmissible.
Sufficient conditions for admissibility in terms of Bayes optimality are, however, the principal tools used to establish admissibility. 
The essential lack of matching necessary conditions means that we do not understand the weaknesses of these tools.
In practice, establishing the admissibility of estimators is often nontrivial.
Indeed, for some thoroughly studied estimators, the question of their admissibility remains open. 
A well-known example is the so-called Graybill--Deal estimator of the common mean of multiple independent normal samples. (We study this estimator in Section~\ref{s.GD}.)  

In any decision problem, it is easy to show that a procedure is admissible if it is the unique Bayes procedure with respect to some prior is admissible.
As examples show, this implication is false if the uniqueness requirement is dropped. %
Under regularity conditions establishing sufficient continuity,
Bayes procedures with respect to priors with full support are admissible.
This condition is not necessary.
The most widely used method for showing admissibility---known as Blyth's method---is 
a generalization of this result based on a theorem of Blyth. (See \cite{lehmann-casella}, Theorems~7.13 and~8.7, also labeled Problem~7.13, for two of its variants.) 
Blyth's theorem formulates a sufficient criterion for being admissible in terms of rates of convergence of certain risks associated to 
a sequence of Bayes procedures approximating a procedure with continuous risk function.
It is not known whether Blyth's method is both a sound and \emph{complete} approach to establishing admissibility.

While there is a relative dearth of necessary and sufficient conditions for admissibility in terms of Bayes optimality,
Stein \cite{stein1955} formulated matching conditions for decisions problems satisfying strong regularity conditions related to compactness as well as a family of game-theoretic conditions.
Stein formulated a second version of his condition in \cite{stein1965}, which requires the same regularity assumptions: Namely, (1) a family of derived statistical decision problems must be determined in the sense that the minimax and maximin values must agree, and (2) the set of decision procedures must satisfy a compactness condition dubbed ``weak compactness in the sense of Wald'' by Stein. 
Once more, examples show one cannot drop either of the regularity assumptions made by Stein.
Stein's condition can be interpreted as an indication that the criterion from Blyth's method is also necessary---that is, if a procedure is admissible, there is a proof by Blyth's method. 
Indeed, this follows from Stein's work for sufficiently regular decision problems (see Section~\ref{ss.stein} below).

\medskip

Recently, methods from nonstandard analysis have been brought to bear on problems in statistical decision theory.
In \cite{duanmuroy}, two of the present authors have identified an exact characterization of \emph{extended admissibility} in terms of Bayes optimality, although one must (1)  enrich the set of available priors to allow priors that assign ``infinitesimal'' mass to sets and (2) ignore infinitesimal differences in Bayes risk when judging Bayes optimality. 
In the present article we continue this line of work, but turn our attention from extended admissibility to admissibility.

With Theorem~\ref{t.sconv} we give a necessary and sufficient condition for admissibility
in term of Bayes optimality (without any regularity assumptions).
In any decision problem, admissibility is Bayes optimality when the latter is appropriately interpreted 
in the setting of nonstandard analysis.
In the simplest possible terms, a procedure is admissible \emph{iff} it is Bayes w.r.t.\ a prior which gives positive---and perhaps infinitesimal---weight to every point in the parameter space.

The main ingredients to the proof of this characterization are:  
First, methods from nonstandard analysis such as the use of model-theoretic saturation; 
Second, a classical characterization of admissibility in problems which are, in a certain sense, finite; and, Third, that admissibility among finitely many procedures is witnessed by a finite set of parameter points.

It would be wrong to regard our characterization as ``Bayes optimality with respect to a weakened notion of prior'' (similar to the use of improper priors).
The correct view is that as we admit the so-called hyperreals into our number system
and extend functions and measures similarly.
More precisely, to each mathematical object corresponds its image under the so-called star map, or monomorphism.
This allows us to characterize the admissible procedures as $\starmap{}$Bayes among \emph{the external set of standard procedures} with respect to a so-called internal prior. See Section~\ref{s.ns} for a quick review of these notions, and of nonstandard analysis.

\medskip

The exact characterization from Theorem~\ref{t.sconv} leads to a variant of Stein's condition (see Definition~\ref{d.stein.blyth}, Items~\ref{d.i.stein.B} and~\ref{d.i.noB} and Theorem~\ref{t.adm}, Items~\ref{i.stein} and~\ref{i.stein.B}) which is necessary and sufficient for admissibility in full generality, that is, with no assumptions whatsoever on the decision problem.
Moreover we find that a variant of Blyth's method for showing admissibility is also equivalent to admissibility (see Definition~\ref{d.stein.blyth}, Items~\ref{d.i.blyth.B} and~\ref{d.i.noB} and Theorem~\ref{t.adm}, Items~\ref{i.blyth} and~\ref{i.blyth.B}).
That is, when couched in a nonstandard setting, Blyth's method must in principle suffice to prove admissibility for any statistical decision procedure.

These results do not only clarify the role of the regularity assumptions in the many classical theorems relating 
admissibility and Bayes optimality. %
Emboldened by the knowledge that the nonstandard Blyth method is a sound and complete method for establishing admissibility, 
we revisit its application to the Graybill--Deal estimator. 
We show that this estimator is admissible
among a certain class of %
unbiased estimators (Corollary~\ref{c.GD}).

Finally, our methods also lead to a new characterization of admissibility under regularity assumptions, in an entirely ``standard'', classical setting---that is, not mentioning any objects from non-standard analysis.
Namely, we also show that in problems with bounded risk functions, admissibility is equivalent to being Bayes under a finitely additive prior (Theorem~\ref{t.bounded.risk}).

\subsection*{Acknowledgments}
We acknowledge the support of the Government of Canada’s New Frontiers in Research Fund (NFRF), 
NFRFE-2018-02164.
The third author would moreover like to thank the Austrian Science Fund (FWF) for the generous support
through START Grant Y1012-N35. 

\section{Some technicalities}

\subsection{Decision problems and regularity}

For clarity, we first discuss and state explicitly the background assumptions we make to guarantee that in decisions problems as we consider them, integrability conditions and similar technical issues pose no further distraction.

To clarify what me mean by a \emph{decision problem}, consider  
a sextuple  
\begin{equation}\label{e.decisionproblem}
\aproblem= \decisionproblem
\end{equation}
where 
\begin{enumerate}[label=(\roman*)]
\item\label{sextuple.first} $\parametersp$, the parameter space, 
is a measurable space which we think of as the possible states of nature;
 \item $\actionsp$, the action space, is a measurable space which we think of the possible courses of action available to the statistician;
\item  $\samplesp$, the sample space is a measurable space which we think as consisting of the possible observations made by the statistician  before deciding on a course of action;
\item upon making a (non-randomized) choice, she suffers a loss given by the 
measurable
function 
$
\lossf \colon \parametersp \times \actionsp \to (-\infty,+\infty);
$
\item $\model = (P_\theta)_{\theta\in\parametersp}$, the model determining the probabilities of observations, is a probability kernel from $\parametersp$ to $\samplesp$; i.e., 
we can view $\model$ as measurable function
\begin{gather*}
\model \colon \parametersp \to \prob(\samplesp),\\
\theta \mapsto \model_\theta;
\end{gather*} 
\item the set of decision procedures
$\decisions$ is a set of 
probability kernels from $\samplesp$ to $\actionsp$, or in other words, a set of 
measurable functions from $\samplesp$ into the set of probability measures on $\actionsp$;
\item\label{sextuple.last} $\priors$, which we think of as the set of priors which the statistician is willing to consider, is a set of probability measures on $\parametersp$.\footnote{Of course, one could be more general and allow $\sigma$-finite or arbitrary measures, and/or demand only finite additivity here.}
\end{enumerate} 

Note that under the above assumptions, the expected loss or risk of $\delta \in \decisions$ given $\theta \in \parametersp$, i.e., the following integral\footnote{One should really write $r^\aproblem$ since $r$ depends on (the first five compononents of) the problem $\aproblem=\decisionproblem$ under consideration, but we will follow general practice and omit reference to $\aproblem$.}
\begin{equation}\label{e.risk}
\riskf(\theta,\delta) := \int_{\samplesp}\int_{\actionsp}\lossf\big(\theta,a\big)\; \delta(x,\mathrm{d}a) P_\theta(\mathrm{d}x) = \expected{x\sim P_\theta, a \sim \delta(x) } \lossf(\theta,a)
\end{equation}
need not be defined or finite; 
and 
the Bayes risk of $\delta \in \decisions$ under a prior $\pi \in \priors$, i.e., the following integral 
\begin{equation}\label{e.bayesr}
\riskf(\pi,\delta) = \int_{\parametersp}\riskf(\theta,\delta) \pi(\mathrm{d}\theta) =  \expected{\theta \sim \pi} \riskf(\theta,\delta)
\end{equation} 
need not be well-defined, either; in fact, not even the function under the integral need be everywhere defined.
The conventional remedy is to impose regularity conditions of one kind or another, to ensure the integrals above exist.\footnote{More assumptions, such as restricting $\decisions$ and/or $\priors$, are commonly made  so as to ensure
that posteriors are well-defined. We have no need for this.}
This introduces a trade off: One may aim for a particular properties---such as being closed with respect to some convenient topology---for the set of priors or the set of decision procedures; or one may aim for imposing as few conditions as possible on $\parametersp$, $\actionsp$, $\lossf$ and the model.

\medskip

We shall henceforth assume the following, without further mention:
\begin{enumerate}[label=(\Roman*)]
\item\label{a.first} The loss function is bounded from below.
\item\label{a.risk} For any $\theta \in \parametersp$ and any $\delta \in \decisions$ the risk, i.e., the integral in \eqref{e.risk}
is finite. %
\item\label{a.bayesrisk} For each $\pi \in \priors$ the Bayes risk, i.e., the integral in  \eqref{e.bayesr}
is finite.
\item\label{a.last} The set of priors $\priors$ contains all priors with finite support.
\end{enumerate}

The reader may see these assumptions as part of our definition of ``decision problem''.
In fact, for better readability, let us state the following definition.
\begin{definition}
In this article, by a \emph{decision problem} we mean a sextuple as in \eqref{e.decisionproblem} such that \ref{sextuple.first}--\ref{sextuple.last} and \ref{a.first}--\ref{a.last} hold.
\end{definition}

Note that in the context of our other assumptions, \ref{a.risk} above implies that 
\begin{gather*}
\riskf(\cdot, \delta)\colon \parametersp \to \reals,\\
\theta \mapsto \riskf(\theta,\delta) %
\end{gather*}
is measurable for every $\delta \in \decisions$, but in general \ref{a.bayesrisk} is not automatic.

\subsection{Nonstandard Analysis}\label{s.ns}
In this section, we quickly review notation and terminology from nonstandard analysis.
Readers with no prior knowledge of nonstandard analysis are referred to \cite{loeb-working}, \cite{nato-book}, or \cite[Appendix A]{duanmuroy} for an introduction.

Throughout, we shall fix a \emph{nonstandard extension}, given by a map, the \emph{star map}, 
\[
x \mapsto \starmap x
\]
taking any object to its nonstandard counterpart or \emph{extension}.
While there is in principle nothing to stop us from taking the entire universe of sets as the domain of this function, we follow the usual convention and take its domain to be a \emph{superstructure} of the form $V_\omega(X) = \bigcup_{n\in\omega} V_n(X)$, where $X$ is some set whose elements we regard as being fixed by the star map, and where $V_n(X)$ is defined inductively by $V_0(X) = X$ and 
$V_{n+1}(X) = \powerset(V_n(X))\cup V_n(X)$.
We assume $\reals \subseteq X$.
The star map is into $V^*_\omega(\starmap X)$, called the \emph{nonstandard universe} and it is an elementary embedding:
A first order formula $\phi(x_0, \hdots, x_n)$ holds in the structure $\langle V_\omega(X), \in\rangle$ if and only if 
$\starmap \phi := \phi(\starmap x_0, \hdots, \starmap x_n)$ holds in $\langle V^*_\omega(\starmap X), \in \rangle$.
A set $x$ is called internal if $x \in V^*(X)$, and external if $x \subseteq \starmap(V_n(X))$ for some $n\in\omega$.

For often used relations (such as $\leq$) and functions (such as $+$) we often drop the star when we refer to the image under the star map, that is, we write
$\leq$ instead of $\starmap\leq$ and $+$ instead of $\starmap +$. 
In practice, this rarely causes confusion.

We shall further assume that $V^*_\omega(\starmap X)$ is polysaturated (or if you prefer, $\kappa$-saturated for some large enough $\kappa$). Recall:
\begin{definition}
The nonstandard extension is called $\kappa$-saturated if given any (internal or external) set $\Theta$ of size $<\kappa$ consisting of internal sentences having $x$ as their only free variable, $\Theta$ is satisfiable in $V^*_\omega(\starmap X)$ if it is finitely satisfiable.
Equivalently, every Filter of size $<\kappa$ consisting of internal sets has non-empty intersection.

The nonstandard extension is called \emph{polysaturated} if it is $\kappa$-saturated where $\kappa$ is the cardinality of $V_\omega(X)$.
\end{definition}
If the reader wishes to use the weakest possible assumption, in this article they can work under $\kappa$-saturatedness, where $\kappa$ is the supremum of the cardinalities of all sets of decision procedures and of all parameter spaces they are interested in considering.

\medskip

Given a topological space $X$, the \emph{near standard points of $X$}, $\NS{X}$ are the points $\tilde x \in \starmap X$ for which there exists $x \in X$ such that $\tilde x \in \starmap U$ for every open neighbourhood $U$ of $x$.
If $\tilde x \in \NS{X}$ and $X$ is Hausdorff, we write $\ST\tilde x$ for the unique $x$ as above.
It is well-known that a topological space is compact if and only if $\starmap X = \NS{X}$.
In particular, the map $\tilde x \mapsto \ST{\tilde x}$ is defined on $\starmap [0,1]$, or more generally, on $\NS{\starmap \reals}$.
Elements of $\starmap \reals$ are called hyperreals, and internal sets which can be brought into internal bijection with the set of predecessors of an element of $\starmap \nat$ are called hyperfinite.
For hyperreals, we also say \emph{finite} to mean near standard. 
A hyperreal $\tilde r$ is \emph{infinitesimal} if it is finite and $\ST \tilde r = 0$.
Given $\tilde r, \tilde r' \in \starmap \reals$, we write $\tilde r \approx \tilde r'$ to mean that $\tilde r-  r'$ is infinitesimal.
We write $\tilde r \lessapprox \tilde r'$ to mean that $\tilde r \leq \tilde r' + \tilde\varepsilon$ for some infinitesimal $\tilde\varepsilon$
(of course, $\gtrapprox$ is defined similarly).

\section{Hyperpriors and admissibility}

For our characterization of admissibility, let us introduce the following terminology.

\begin{definition}
Given a decision problem $\decisionproblem$, $\Pi \in \starmap\priors$, $\Delta_0 \in \starmap\decisions$ and a possibly external set $\decisions_0 \subseteq \starmap\decisions$, we say 
\emph{$\Delta_0$ is $\starmap{}$Bayes among $\decisions_0$ under $\Pi$} or \emph{$\Pi$-$\starmap{}$Bayes among $\decisions_0$} to mean that
for all $\Delta \in \decisions_0$,
$\starmap\riskf(\Delta_0,\Pi) \leq \starmap\riskf(\Delta,\Pi)$. 
We say 
\emph{$\Delta_0$ is $\starmap{}$Bayes among $\decisions_0$ among $\decisions_0$} to mean that for some $\Pi \in \starmap\priors$, $\Delta_0$ is $\starmap{}$Bayes among $\decisions_0$ under $\Pi$.

We write 
\[
\sigmamap{\decisions} := \{\starmap\delta \setdef \delta \in\decisions\},
\]
called the \emph{standard part copy of $\decisions$}.
\end{definition}

We can now state our theorem giving a characterization of admissibility in Bayesian terms.

\begin{theorem}\label{t.sconv}
Suppose $\decisionproblem$ is a decision problem %
where $\decisions$ is a convex set of decision procedures, and $\delta_0 \in \decisions$ is admissible  among $\decisions$.
Then there exists a non-standard prior $\Pi \in \starmap{\priors}$ with hyperfinite support such that $\Pi(\{\starmap{\theta}\})>0$ for every $\theta\in\parametersp$ and such that
$\starmap{\delta_0}$ is a $\Pi$-$\starmap{}$Bayes procedure among $\sigmamap{\decisions}$.
\end{theorem}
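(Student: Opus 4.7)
The plan is to construct $\Pi$ by polysaturation. For each finite $F \subseteq \parametersp$ and finite $D \subseteq \decisions$ with $\delta_0 \in D$, let $\phi_{F,D}(\Pi)$ be the internal formula asserting: $\Pi \in \starmap\priors$ has internally finite support; $\Pi(\{\starmap\theta\}) > 0$ for every $\theta \in F$; and $\starmap\riskf(\Pi, \starmap\delta) \geq \starmap\riskf(\Pi, \starmap\delta_0)$ for every $\delta \in D$. The conjunction of these formulas over all such $(F,D)$ is exactly what the theorem demands of $\Pi$ (with \textit{internally finite} externalising to \textit{hyperfinite}). By polysaturation it suffices to realise each $\phi_{F,D}$ by a standard prior: that is, to produce $\pi_{F,D} \in \priors$ of finite support with $\pi_{F,D}(\{\theta\}) > 0$ for $\theta \in F$ and $\riskf(\pi_{F,D}, \delta) \geq \riskf(\pi_{F,D}, \delta_0)$ for $\delta \in D$. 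Any finite subcollection of $\phi$'s is then witnessed by $\starmap{\pi_{F',D'}}$ for $F' \supseteq \bigcup F$, $D' \supseteq \bigcup D$.

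Fix $F, D$. Write $v_\delta := \riskf(\cdot,\delta) - \riskf(\cdot,\delta_0) \in \reals^\parametersp$ and, for $\lambda$ in the simplex $\Delta_D$, set $v_\lambda := \sum_\delta \lambda_\delta v_\delta$. By Motzkin's theorem of the alternative, for any finite $F^* \supseteq F$ the following are equivalent: (i) there is a standard prior $\pi$ supported on $F^*$, strictly positive there, with $\sum_{\theta \in F^*}\pi(\theta)v_\delta(\theta) \geq 0$ for every $\delta \in D$; and (ii) no $\lambda \in \Delta_D$ has $v_\lambda|_{F^*} \leq 0$ with $v_\lambda|_{F^*} \not\equiv 0$. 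Condition (ii) is just the admissibility of $\delta_0$ inside $\mathrm{co}(D) \subseteq \decisions$ when the parameter space is restricted to $F^*$. So the problem reduces to finding a finite $F^* \supseteq F$ achieving this convex-hull admissibility.

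The main obstacle is that the easy witness for admissibility among the \emph{finite set} $D$ --- a parameter at which each non-tying $\delta \in D$ is strictly worse than $\delta_0$ --- need not suffice for $\mathrm{co}(D)$, whereas the Motzkin duality above forces us to admissibility in the convex hull. I resolve this by compactness and renormalisation. Suppose for contradiction no finite $F^* \supseteq F$ works, and for each $F^* \supseteq F$ choose $\lambda^{F^*} \in \Delta_D$ with $v_{\lambda^{F^*}}|_{F^*} \leq 0$ and $v_{\lambda^{F^*}}|_{F^*} \not\equiv 0$. Since the $F^*$'s are directed by inclusion and $\Delta_D$ is compact, pass to a subnet along which $\lambda^{F^*} \to \lambda^*$. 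Every $\theta \in \parametersp$ belongs to $F^*$ eventually, so passing to limits gives $v_{\lambda^*}(\theta) \leq 0$ for every $\theta$; the global admissibility of $\delta_0$, applied to $\sum_\delta \lambda^*_\delta \delta \in \mathrm{co}(D) \subseteq \decisions$, forces $v_{\lambda^*} \equiv 0$. Hence $\lVert v_{\lambda^{F^*}} \rVert \to 0$ in the finite-dimensional subspace $V := \mathrm{span}\{v_\delta : \delta \in D\}$ of $\reals^\parametersp$. Renormalising $w^{F^*} := v_{\lambda^{F^*}}/\lVert v_{\lambda^{F^*}} \rVert$ on the unit sphere of $V$ and extracting a further convergent subnet $w^{F^*} \to w^*$, the same pointwise-limit argument yields $w^*(\theta) \leq 0$ for every $\theta \in \parametersp$ while $\lVert w^* \rVert = 1$. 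Each $w^{F^*}$ lies in the closed, finitely generated cone $\mathrm{cone}\{v_\delta : \delta \in D\}$, so $w^* = \sum_\delta \alpha_\delta v_\delta$ with $\alpha_\delta \geq 0$ and $s := \sum_\delta \alpha_\delta > 0$. Then $\delta^{**} := \sum_\delta (\alpha_\delta/s)\,\delta \in \mathrm{co}(D) \subseteq \decisions$ satisfies $v_{\delta^{**}} = w^*/s$, which is nonzero and pointwise $\leq 0$, contradicting the global admissibility of $\delta_0$. This produces $F^*$ and hence the required $\pi_{F,D}$, completing the saturation argument.
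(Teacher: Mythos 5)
Your proof is correct, but it takes a genuinely different route from the paper's. The paper proves Theorem~\ref{t.sconv} by a single application of transfer to one internal problem: saturation produces hyperfinite sets $\tilde\parametersp$ and $\tilde\decisions$ containing (the star images of) all standard parameters and procedures, the transfer of Item~\ref{i.randomized} of Lemma~\ref{l.steppingstone} enlarges $\tilde\parametersp$ so that admissibility of $\starmap{\delta_0}$ among the $\starmap{}$convex hull of $\tilde\decisions$ is witnessed inside $\tilde\parametersp$, and then the transfer of the finite-case theorem of Blackwell--Girshick (Theorem~\ref{t.hyperprior.admissibility}) applied to this hyperfinite problem yields $\Pi$ at once. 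You instead work entirely with standard finite subproblems $(F,D)$ and glue their solutions by polysaturation, so that transfer is only used in the trivial direction (star images of standard witnesses satisfy the transferred inequalities). Within each finite subproblem, Motzkin's transposition theorem plays the role of Blackwell--Girshick, and your compactness-plus-renormalization argument on $\Delta_D$ and on the unit sphere of $V=\mathrm{span}\{v_\delta : \delta\in D\}$ plays the role of Lemma~\ref{l.steppingstone}. Your version of that step is in one respect sharper: by permitting ties ($v_\lambda|_{F^*}\equiv 0$) in condition (ii) and disposing of the degenerate case via normalization inside the closed finitely generated cone, you avoid the hypothesis of Lemma~\ref{l.steppingstone} that $\delta_0$ not be risk-equivalent to any member of the convex hull --- a hypothesis that, read literally, fails in the paper's own application of the lemma (since $\starmap{\delta_0}$ itself lies in the $\starmap{}$convex hull of $\tilde\decisions$) and must be finessed there. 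What the paper's route buys is brevity, given the cited literature, and a prior whose support is a single hyperfinite set containing every $\starmap{\theta}$ from the outset; what your route buys is self-containedness (a theorem of the alternative instead of a decision-theoretic citation), a cleaner treatment of risk-equivalence, and the structural point that the nonstandard prior arises as a saturation limit of ordinary finitely supported Bayes priors for finite fragments of the problem.
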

A little less pedantically, 
$\delta_0$ is %
Bayes in $\decisions$ with respect to a non-standard prior which gives (possibly infinitesimal) positive weight to all standard points. 

\medskip

As we shall see, the above theorem follows easily from a preliminary result involving only standard notions, which we prove in several steps in the next section. 
In the section after that, we will prove the above theorem.

\subsection{Finite sets of parameters witnessing admissibility}

In this and the next section, we show that any admissible decision procedure can be separated in risk from finitely many other decision procedures with respect to a carefully chosen prior.
In fact it be can done using a prior which is finitely supported.
For a ``standard proof'' of this result (that is, a proof not using infinitesimals) we shall first prove the following two results as stepping stones.
We find them interesting in their own right and so state them separately. 
After that, we shall give a second proof using nonstandard analysis. 

\begin{lemma}\label{l.steppingstone}
Let $\decisionproblem$ be a decision problem and $\delta_0 \in \decisions$.
Suppose one of the following:
\begin{enumerate}[label=(\roman*),ref=(\roman*)]
\item\label{i.compact} The set $\decisions' \subseteq \decisions$ can be given the structure of a compact topological space such that $\delta \mapsto \riskf(\theta,\delta)$ is continuous on $\decisions'$ for every $\theta \in\parametersp$. %

\item\label{i.randomized} $\decisions'\subseteq\decisions$ arises as the set of randomizations of a finite subset $\decisions_1$ of $\decisions$. %
\end{enumerate}
Suppose further that $\delta_0$ is not in $\decisions'$ up to equivalence in risk and admissible among $\decisions'$.
Then there is a finite set $\parametersp_0 \subseteq \parametersp$
such that for every $\delta \in \decisions'$ there is $\theta\in\parametersp_0$ with
$\riskf(\theta,\delta_0) < \riskf(\theta,\delta)$.
In other words, $\delta_0$ is admissible among $\decisions'$ witnessed on $\parametersp_0$.
\end{lemma}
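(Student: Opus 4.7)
The plan is to reduce both cases to a single open-cover compactness argument on $\decisions'$. For each $\theta \in \parametersp$ set
\[
U_\theta := \{\delta \in \decisions' : \riskf(\theta,\delta_0) < \riskf(\theta,\delta)\}.
\]
I first verify that the family $\{U_\theta\}_{\theta \in \parametersp}$ covers $\decisions'$. Fix $\delta \in \decisions'$. Since $\delta_0$ is not equivalent in risk to any member of $\decisions'$, the maps $\theta \mapsto \riskf(\theta,\delta)$ and $\theta \mapsto \riskf(\theta,\delta_0)$ disagree at some parameter. If we had $\riskf(\theta,\delta) \leq \riskf(\theta,\delta_0)$ for every $\theta$, then by the preceding disagreement the inequality would be strict somewhere, so $\delta$ would dominate $\delta_0$, contradicting admissibility of $\delta_0$ among $\decisions'$. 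Hence there is $\theta$ with $\riskf(\theta,\delta_0) < \riskf(\theta,\delta)$, i.e.\ $\delta \in U_\theta$.

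Under the hypothesis of \ref{i.compact}, each $U_\theta$ is open in $\decisions'$, being the preimage of the open half-line $(\riskf(\theta,\delta_0),+\infty)$ under the continuous map $\delta \mapsto \riskf(\theta,\delta)$. Compactness of $\decisions'$ yields a finite subcover $U_{\theta_1}, \ldots, U_{\theta_n}$, and $\parametersp_0 := \{\theta_1, \ldots, \theta_n\}$ is the desired witnessing set.

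Under the hypothesis of \ref{i.randomized}, write $\decisions_1 = \{\delta_1, \ldots, \delta_k\}$ and identify $\decisions'$ with the probability simplex $\Sigma_k := \{\alpha \in \reals^k : \alpha_i \geq 0,\ \sum_{i=1}^k \alpha_i = 1\}$ via $\alpha \mapsto \delta_\alpha$, where $\delta_\alpha$ is the randomization using $\delta_i$ with probability $\alpha_i$. By linearity of integration, $\riskf(\theta,\delta_\alpha) = \sum_i \alpha_i \riskf(\theta,\delta_i)$, so $\alpha \mapsto \riskf(\theta,\delta_\alpha)$ is affine, hence continuous, on the compact simplex $\Sigma_k$. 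This brings us under the hypothesis of \ref{i.compact}, and we conclude as above.

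The main (mild) subtlety is the risk-equivalence step, which is needed to rule out the possibility that $\delta$ and $\delta_0$ have identical risk functions and thus to produce a \emph{strict} inequality; it is handled up front when establishing that the $U_\theta$ cover $\decisions'$. Beyond that, the proof is a textbook open-cover compactness argument, with \ref{i.randomized} subsumed into \ref{i.compact} via the standard identification of randomizations with the simplex.
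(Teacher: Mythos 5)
Your proof is correct and follows essentially the same route as the paper's: an open-cover compactness argument for case \ref{i.compact} (the paper covers $\decisions'$ by neighbourhoods of points on which a fixed $\theta$ works, whereas you cover it by the sets $U_\theta$ directly—the same argument in dual form), and a reduction of case \ref{i.randomized} to case \ref{i.compact} via the simplex identification and linearity of the risk in convex combinations. Your explicit treatment of the risk-equivalence/admissibility step matches the paper's (more tersely stated) justification, so there is nothing missing.
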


\begin{proof}[Proof of Lemma~\ref{l.steppingstone}]
Fix $\decisionproblem$, $\decisions'$ and $\delta_0$ as in the lemma.
As we shall see, the case described in each item leads to a proof of the next one.

First assume the hypothesis in Item \ref{i.compact} holds.
By admissibility and since $\delta_0$ is not equivalent to any procedure in $\decisions'$, for every $\delta \in \decisions'$ there is $\theta \in \parametersp$ 
such that $\riskf(\theta,\delta_0) < \riskf(\theta,\delta)$.
Since the risk function is continuous in its second argument, we can find for each $\delta \in \decisions'$ a neighbourhood $U$ of $\delta$ such that the same $\theta$ works for each $\delta' \in U$, i.e., 
such that for each $\delta'\in U$ it holds that $\riskf(\theta,\delta_0) < \riskf(\theta,\delta')$.
By compactness of $\decisions'$ we can cover $\decisions'$ by finitely many neighbourhoods $U_0, \hdots, U_n$, and find
$\theta_0, \hdots, \theta_n \in \parametersp$ so that for every $\delta \in \decisions'$ there is $i\leq n$ such that $\delta \in U_i$, and 
$\riskf(\theta_i,\delta_0) < \riskf(\theta_i,\delta')$ for every $\delta' \in U_i$.
In particular, for every $\delta \in \decisions'$, there is a parameter $\theta_i$ in
\[
\parametersp_0 := \{\theta_0, \hdots, \theta_n\}
\]
such that $\riskf(\theta_i,\delta_0) < \riskf(\theta_i,\delta)$.

\medskip

Next, let us assume the hypothesis from Item \ref{i.randomized}. 
Let us enumerate $\decisions_1$ as follows,
\[
\decisions_1 = \{\delta_1, \hdots, \delta_n\},
\]
recalling that by assumption, $\decisions'$ is the set of randomizations of procedures from $\decisions_1$.

We can view $\decisions'$ as a compact topological space by identifying each randomized procedure with a point in the $n$-simplex $\Delta^{n-1}$. 
In a decision problem with randomization the risk function is linear in the second argument for convex sums, and so
 for each $\theta \in \parametersp$, the function 
\begin{gather*}
\delta \mapsto \riskf(\theta, \delta),\\
\Delta^{n-1} \to \reals
\end{gather*} 
is continuous. So we can apply the previous result, Item~\ref{i.compact}, of the present lemma to obtain $\parametersp_0$ as claimed.
\renewcommand{\qedsymbol}{{\tiny Lemma \ref{l.steppingstone}.} $\Box$}
\end{proof}

\begin{remark}
The previous lemma uses only very little of the structure of a decision problem, and in particular, could also be formulated in a game theoretic setting.
Item~\ref{i.compact} will go through for any sets $\parametersp$ and $\decisions$ and any function 
$\riskf \colon \parametersp \times \decisions$ satisfying the continuity assumption stated in the lemma.
Item~\ref{i.randomized}  will go through for any set $\parametersp$ and function $\riskf \colon \parametersp \times \decisions$, provided $\decisions$ is convex and $\riskf$ is linear in its second argument.
\end{remark}

As it happens, the part of Lemma~\ref{l.steppingstone} which is relevant in what follows, namely Item \ref{i.randomized}, can also be shown directly using a non-standard argument.

\begin{proof}[Nonstandard proof of Item \ref{i.randomized} of Lemma~\ref{l.steppingstone}]
We shall show that there exists a finite set $\parametersp_0\subseteq \Theta$ as above by contradiction; 
so let us suppose not. 
Then for any finite subset $\parametersp'\subseteq \Theta$, there exists a decision procedure $\delta_{\parametersp'}\in \decisions'$ such that $\delta_{\parametersp'}$ dominates $\delta_0$ on $\parametersp'$.  For each $\parametersp'\in \Theta^{[<\omega]}$, let $\phi(\parametersp')$ be the formula
\[
(\exists \Delta\in \starmap{\decisions'})\;\Delta \text{ dominates } \starmap{\delta_0} \text{ on } \parametersp'.
\] 
Then the collection $\{\phi(\parametersp'): \parametersp'\subseteq\parametersp$, $\parametersp'$ finite $\}$ is finitely satisfiable. By saturation, 
we can find $\Delta\in \starmap{\decisions'}$ such that $\Delta$ dominates $ \starmap{\delta_0}$ on $\parametersp$.

Clearly, $\Delta$ is a $\starmap{}$convex combination of elements in 
$\starmap{\decisions_1}$.  
Let $\delta_1, \hdots, \delta_l$ enumerate $\decisions_1$. 
Find $\lambda_1, \hdots, \lambda_n \in\starmap{[0,1]}$ such that  $\Delta=\sum_{i=1}^{n}\lambda_{i}\starmap{\delta_i}$. 
Define a standard decision procedure $\delta$ as follows:
\[
\delta:=\sum_{i=1}^{n}\ST(\lambda_{i})\delta_i.
\] 
Then $\delta$ is an element of $\decisions'$ and $\riskf(\theta, \delta)\approx\starmap{\riskf(\theta, \Delta)}$.
Since by  construction, $\Delta$ dominates $\delta_0$ on $\parametersp$, i.e., $\starmap{\riskf(\theta, \Delta)}\leq \starmap{\riskf(\theta, \starmap{\delta_{0}})}$  for every $\theta\in \parametersp$, and
since in a decision problem with randomization, the risk function is linear in the second argument for convex sums, we infer that also $\riskf(\theta,\delta)\leq \riskf(\theta, \delta_0)$ for every $\theta\in \parametersp$. 
This contradicts our assumption that $\delta_0$ is admissible among but not equivalent in risk to a procedure in $\decisions'$, finishing the proof.
\end{proof}

\subsection{Admissibility is Bayes optimality with respect to a hyperprior}

Using the arguments of the previous section, we can reduce the problem of proving Theorem~\ref{t.sconv} 
to a problem involving only a much smaller, very well studied class of decision problems.
In this class, the theorem is known to be true:

\begin{theorem}[\cite{blackwell-girshick}, 5.2.5, p.~130]\label{t.hyperprior.admissibility}
Suppose $\decisionproblem$ is a decision problem such that $\parametersp$ is finite and 
$\decisions$ is the convex hull of finitely many decision procedures, and further suppose $\delta_0 \in \decisions$ is
admissible among $\decisions$.
Then there exists a prior $\pi \in \priors$ such that $\delta_0$ is $\pi$-Bayes among $\decisions$ and
$\pi$ is everywhere positive on $\parametersp$. 
\end{theorem}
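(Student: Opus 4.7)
The plan is to reduce to a finite-dimensional geometric fact. Since $\parametersp$ is finite and $\riskf$ is linear in its second argument under convex combinations of probability kernels, the risk map $\delta \mapsto R(\delta) := (\riskf(\theta, \delta))_{\theta\in\parametersp}$ sends $\decisions$ onto a polytope $S \subseteq \reals^{\parametersp}$, and admissibility of $\delta_0$ translates into $R_0 := R(\delta_0)$ being Pareto-minimal in $S$ with respect to the coordinatewise order, i.e., no $s \in S$ satisfies $s \leq R_0$ with $s \neq R_0$. A prior $\pi$ as in the theorem is then the same thing, up to normalization, as a $\pi \in \reals^{\parametersp}$ with strictly positive coordinates at which $x \mapsto \pi \cdot x$ is minimized on $S$ by $x = R_0$; once produced, its normalization lies in $\priors$ by assumption~\ref{a.last} and witnesses $\delta_0$ as $\pi$-Bayes.

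First I would apply a separating-hyperplane argument to the polyhedron $A := (S - R_0) + \reals^{\parametersp}_{\geq 0}$, which contains $0$ and, by admissibility, satisfies $A \cap (-\reals^{\parametersp}_{\geq 0}) = \{0\}$: any $a \in A$ with $a \leq 0$ decomposes as $s - R_0 + u$ with $s \in S$ and $u \geq 0$, forcing $s \leq R_0$, hence $s = R_0$ and $a = 0$. Let $N := \{\pi \in \reals^{\parametersp} : \pi \cdot a \geq 0 \text{ for all } a \in A\}$ be the polar cone of $A$. The $\reals^{\parametersp}_{\geq 0}$ summand of $A$ forces $N \subseteq \reals^{\parametersp}_{\geq 0}$, and elements of $N$ are exactly the non-negative priors under which $\delta_0$ is Bayes. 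Standard separation of the disjoint convex sets $A$ and the open cone $-\reals^{\parametersp}_{> 0}$ then gives $N \neq \emptyset$.

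The main obstacle, and the heart of the proof, is upgrading non-negativity to strict positivity. Here I use convexity of $N$: if for every $\theta \in \parametersp$ one can find $\pi_\theta \in N$ with $\pi_\theta(\theta) > 0$, then $\pi := \sum_{\theta \in \parametersp} \pi_\theta$ lies in $N$ and is strictly positive on $\parametersp$, completing the proof. The only obstruction is therefore that $N \subseteq \{\pi : \pi(\theta^*) = 0\}$ for some $\theta^* \in \parametersp$. I would rule this out by dualization: the conic hull $T := \{\lambda a : \lambda \geq 0,\, a \in A\}$ is itself a polyhedron (via Fourier--Motzkin elimination on the lifted polyhedron $\{(\lambda, y) : \lambda \geq 0,\, By \geq \lambda c\}$, writing $A = \{x : Bx \geq c\}$), hence closed, and $N = T^*$. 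The bipolar theorem then gives $T = N^*$, and the hypothetical inclusion $N \subseteq e_{\theta^*}^{\perp}$ dualizes to $T \supseteq \operatorname{span}(e_{\theta^*})$. In particular $-e_{\theta^*} \in T$, so $-e_{\theta^*} = \lambda a$ for some $\lambda > 0$ and $a \in A$, producing a non-zero element of $A \cap (-\reals^{\parametersp}_{\geq 0})$ and contradicting admissibility.
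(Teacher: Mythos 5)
Your proof is correct, but there is nothing in the paper to compare it against: the paper does not prove this statement, it imports it verbatim from Blackwell--Girshick (Theorem 5.2.5 there) and uses it as a black box, both directly and, via transfer, for hyperfinite problems. So what you have written is a self-contained proof of the cited classical result. Your reduction is the natural one (risk vectors of $\decisions$ form a polytope $S$ by linearity of risk under randomization; admissibility of $\delta_0$ is Pareto minimality of $R_0$ in $S$; assumption~\ref{a.last} handles membership of the normalized $\pi$ in $\priors$), and the heart of your argument --- closedness of the conic hull $T = \operatorname{cone}\bigl((S - R_0) + \reals^{\parametersp}_{\geq 0}\bigr)$, so that the bipolar theorem applies and the hypothetical inclusion $N \subseteq e_{\theta^*}^{\perp}$ dualizes to $-e_{\theta^*} \in T$ --- is exactly where the hypothesis that $\decisions$ is the convex hull of \emph{finitely many} procedures must enter. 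This is the right place: for a general compact convex $S$ the cone $T$ need not be closed and the theorem is false (by Arrow--Barankin--Blackwell one only gets density of the positive-prior Bayes points among the admissible points), so any correct proof has to exploit polyhedrality, and yours does so cleanly and in an essentially minimal way.

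Two minor remarks. First, your separating-hyperplane step producing a nonzero element of $N$ is redundant: the duality argument alone shows that for every $\theta^*$ some $\pi \in N$ has $\pi(\theta^*) > 0$ (if $N = \{0\}$ then $T = N^* = \reals^{\parametersp}$, which already contradicts admissibility as in your last step). Second, in the Fourier--Motzkin step, the projection of $\{(\lambda,y) : \lambda \geq 0,\ By \geq \lambda c\}$ onto $y$ is $\bigcup_{\lambda > 0} \lambda A \cup \{y : By \geq 0\}$, and the $\lambda = 0$ slice is the recession cone of $A$; you need the observation that $0 \in A$ (whence $\operatorname{rec}(A) \subseteq A \subseteq \operatorname{cone}(A)$) to conclude that this projection equals $T$ rather than something strictly larger. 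Alternatively you can bypass Fourier--Motzkin entirely: writing $S - R_0 = \operatorname{conv}(s_1, \ldots, s_k)$, the identity $T = \operatorname{cone}(s_1, \ldots, s_k, e_{\theta_1}, \ldots, e_{\theta_n})$ (again using $0 \in A$) exhibits $T$ as finitely generated, hence polyhedral and closed by Minkowski--Weyl.
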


We now have all the necessary tools at our disposal to prove our main theorem.
\begin{proof}[Proof of Theorem~\ref{t.sconv}]
Let $\decisionproblem$ and $\delta_0$ be given as in Theorem~\ref{t.sconv}.
Using saturation, find hyperfinite sets $\tilde\parametersp$ and $\tilde \decisions$ such that
\begin{equation}\label{e.tilde}
\begin{gathered}
\parametersp\subseteq \tilde\parametersp \subseteq \starmap{\parametersp},\\
\decisions\subseteq \tilde\decisions \subseteq \starmap{\decisions}.
\end{gathered}
\end{equation}
Note that $\starmap{\delta_0}$ is admissible among the $\starmap{}$convex hull of $\tilde\decisions$.
By transfer and Lemma~\ref{l.steppingstone} Item \ref{i.randomized}, we may assume that $\tilde\parametersp$ is large enough to witness that 
$\starmap{\delta_0}$ is admissible among the $\starmap{}$convex hull of $\tilde\decisions$ (otherwise, replace $\tilde\parametersp$ with its union with a hyperfinite set $\tilde\parametersp_0 \subseteq \starmap{\parametersp}$ obtained by using the transfer of Lemma~\ref{l.steppingstone} Item \ref{i.randomized}).
By transfer, the previous theorem holds in the nonstandard extension for problems with hyperfinite parameter space and convex sets of procedures with hyperfinitely many extreme points.
Thus, it applies to the problem obtained from $\starmap{\decisionproblem}$ by restricting the space of parameters to  $\tilde\parametersp$ and the set of allowed procedures to the $\starmap{}$convex hull of $\tilde \decisions$ 
(and restricting $\lossf$ and $\priors$ in the obvious manner).
We obtain a hyperprior $\Pi \in \starmap{\priors}$ such that $\tilde \parametersp = \supp(\Pi)$ and
$\starmap{\delta_0}$ is $\starmap{}$Bayes among $\tilde \decisions$.
By \eqref{e.tilde} this proves the theorem.
\end{proof}

\section{Stein's condition}\label{s.stein}
\subsection{Stein's necessary and sufficient condition for admissibility}\label{ss.stein}

Let's introduce the following short-hand for risk functions shifted so that they are relative to the base-line given by a procedure $\delta$.
\begin{definition}
For $\delta, \delta' \in \decisions$, and $\pi \in\priors$ write
\[
\riskf_\delta(\pi,\delta') := \riskf(\pi,\delta') - \riskf(\pi,\delta).
\]
\end{definition}
Of couse likewise, for $\theta\in\parametersp$ use the corresponding short-hand,
\[
\riskf_\delta(\theta,\delta') := \riskf(\theta,\delta') - \riskf(\theta,\delta),
\]
which one can view as a special case of the previous definition via the embedding of $\parametersp$ in $\priors$ by associating to each parameter point its Dirac measure.

\medskip

In his 1955 article \cite{stein1955} Stein formulated a condition (which we shall call \emph{Stein's condition}) 
which characterizes admissibility provided the decision problem satisfies certain technical assumptions (which we shall call
\emph{Stein's assumptions}).

\begin{definition}
Let $\decisionproblem$ be a decision problem. 
We say $\delta_0 \in \decisions$ satisfies \emph{Stein's condition} if
\[
(\forall \theta \in \parametersp) (\forall \varepsilon \in \reals_{>0})
(\exists \pi \in \priors)\;  \left[
\pi(\{\theta\})>0 \land
\riskf(\pi,\delta_0) - \inf_{\delta \in \decisions} \riskf(\pi,\delta) \leq \pi(\{\theta\}) \cdot  \varepsilon \right]
\]
\end{definition}
This condition implies admissibility (without any assumption on the decision problem at hand).
To state Stein's technical assumptions sufficient for its sufficiency, we first recall Wald's notion of weak compactness:
\begin{definition}
Given a decision problem $\decisionproblem$, we say $\decisions$ is weakly compact
if every sequence $(\delta_n)_{n\in \nat}$ from $\decisions$ has a subsequence $(\delta_{n(k)})_{k\in \nat}$ such that there exists 
$\delta_\infty$ with
\[
(\forall \theta\in\parametersp) \; \liminf_k \riskf(\theta,\delta_{n(k)}) \geq  \riskf(\theta,\delta_\infty)
\]
\end{definition}

\begin{definition}
We say a decision problem $\decisionproblem$ satisfies \emph{Stein's assumptions} if
the game $(\priors,\decisions, \riskf)$, associated to this problem satisfies the following:
\begin{enumerate}
\item $\decisions$ is weakly compact in the sense of Wald,
\item For every $\theta_0 \in \parametersp$ and every $\gamma \in \reals_{>0}$ %
we have %
\begin{equation}
\inf_{\delta\in \decisions} \sup_{\theta\in \parametersp} 
\big[
r_{\delta_0}(\theta_0, \delta) + \gamma r_{\delta_0}(\theta, \delta) 
\big]
= \sup_{\theta\in \parametersp} \inf_{\delta\in \decisions}
\big[
r_{\delta_0}(\theta_0, \delta) + \gamma r_{\delta_0}(\theta, \delta)
\big]
\end{equation}
that is, the game ($\priors, \decisions, r^{\theta_0,\gamma})$ with
$r^{\theta_0,\gamma} = r_{\delta_0}(\theta_0, \delta) + \gamma r_{\delta_0}(\theta, \delta)$
has a value.
\end{enumerate}
\end{definition}

We can now state Stein's classical result as follows:

\begin{theorem}[\cite{stein1955}]
Provided Stein's assumptions hold for $\decisionproblem$, $\delta_0$ satisfies Stein's criterion if (and therefore, if and only if) $\delta_0$ is admissible among $\decisions$.
\end{theorem}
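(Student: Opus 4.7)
The plan is to treat the two directions of the ``iff'' separately. The ``if'' direction (Stein's criterion implies admissibility) needs no regularity hypothesis and is essentially noted in the text; it follows by direct contradiction, since a procedure $\delta'$ dominating $\delta_0$ with $r_{\delta_0}(\theta^*,\delta') \leq -\varepsilon_0$ would force $r(\pi,\delta_0) - r(\pi,\delta') \geq \pi(\{\theta^*\}) \varepsilon_0$ against every $\pi\in\priors$ with $\pi(\{\theta^*\}) > 0$, incompatible with Stein's criterion applied at $(\theta^*, \varepsilon_0/2)$.

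For the ``only if'' direction, fix $\theta_0\in\parametersp$ and, WLOG, $\varepsilon\in(0,1]$, and for each $\gamma\in\reals_{>0}$ let $V(\gamma)$ denote the value of the game $r^{\theta_0,\gamma}$; this exists by Stein's assumption, and substituting $\delta=\delta_0$ in the minimax form gives $V(\gamma) \leq 0$. The crux is to show $V(\gamma) \to 0$ as $\gamma\to\infty$. I would argue by contradiction: if $\gamma_n\to\infty$ and $V(\gamma_n) < -\varepsilon_1$ for some $\varepsilon_1\in(0,\varepsilon]$, select $\delta_n\in\decisions$ with
\[
r_{\delta_0}(\theta_0,\delta_n) + \gamma_n \sup_\theta r_{\delta_0}(\theta,\delta_n) < -\varepsilon_1.
\]
Taking $\theta=\theta_0$ in the sup gives $r_{\delta_0}(\theta_0,\delta_n) < 0$; since admissibility rules out $\sup_\theta r_{\delta_0}(\theta,\delta_n) \leq 0$ (else $\delta_n$ would dominate $\delta_0$), one obtains $\sup_\theta r_{\delta_0}(\theta,\delta_n) > 0$, which then forces $|r_{\delta_0}(\theta_0,\delta_n)| > \varepsilon_1$ and $\sup_\theta r_{\delta_0}(\theta,\delta_n) < (|r_{\delta_0}(\theta_0,\delta_n)| - \varepsilon_1)/\gamma_n$. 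Setting
\[
\tilde\delta_n := \alpha_n \delta_n + (1-\alpha_n)\delta_0, \qquad \alpha_n := \min\bigl(1,\, 1/|r_{\delta_0}(\theta_0,\delta_n)|\bigr),
\]
and using linearity of the risk under randomization, a short case analysis yields $r_{\delta_0}(\theta_0,\tilde\delta_n) \leq -\varepsilon_1$ together with $\sup_\theta r_{\delta_0}(\theta,\tilde\delta_n) \leq 1/\gamma_n \to 0$. Weak compactness of $\decisions$ in the sense of Wald then produces $\delta_\infty\in\decisions$ satisfying $r(\theta,\delta_\infty) \leq r(\theta,\delta_0)$ for all $\theta$ and $r(\theta_0,\delta_\infty) \leq r(\theta_0,\delta_0) - \varepsilon_1$, contradicting admissibility.

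With $V(\gamma)\to 0$ in hand, pick $\gamma$ large enough that $V(\gamma) \geq -\varepsilon/2$ and use the maximin form of the game value to find $\pi_\gamma\in\priors$ with
\[
\inf_{\delta\in\decisions} \bigl[r_{\delta_0}(\theta_0,\delta) + \gamma \textstyle\int r_{\delta_0}(\theta,\delta)\,\pi_\gamma(\mathrm{d}\theta)\bigr] \geq -\varepsilon.
\]
The convex combination $\pi := (\dirac{\theta_0} + \gamma\pi_\gamma)/(1+\gamma)$ assigns mass at least $1/(1+\gamma)$ to $\theta_0$, and dividing the preceding bound by $1+\gamma$ rearranges to
\[
r(\pi,\delta_0) - \inf_{\delta\in\decisions} r(\pi,\delta) \leq \frac{\varepsilon}{1+\gamma} \leq \pi(\{\theta_0\}) \cdot \varepsilon,
\]
which is exactly Stein's criterion at $(\theta_0,\varepsilon)$.

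The main obstacle I anticipate is the renormalization step inside the limit argument for $V(\gamma)\to 0$. Without rescaling, $r_{\delta_0}(\theta_0,\delta_n)$ may be arbitrarily negative, so the Wald-compact limit of $(\delta_n)$ would only be guaranteed to weakly dominate $\delta_0$ with no strict improvement at $\theta_0$. The choice $\alpha_n = \min(1,1/|r_{\delta_0}(\theta_0,\delta_n)|)$ is what simultaneously preserves a bounded negative excess risk at $\theta_0$ and drives the worst-case excess risk to zero; balancing these two constraints, via convexity of $\decisions$ and linearity of risk under randomization, is the key technical move.
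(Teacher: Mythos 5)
A preliminary remark: the paper offers no proof of this theorem at all --- it is stated purely as a citation of Stein's 1955 article --- so your proposal can only be compared with Stein's classical argument, and in outline it reconstructs that argument faithfully. Both directions are organized correctly: the ``if'' direction by direct contradiction (fine as written), and the ``only if'' direction by showing that the value $V(\gamma)$ of the auxiliary game satisfies $V(\gamma)\le 0$ (substitute $\delta_0$), that $V(\gamma)\to 0$ as $\gamma\to\infty$ (by contradiction, combining admissibility with Wald weak compactness), and finally extracting the prior $\pi=(\dirac{\theta_0}+\gamma\pi_\gamma)/(1+\gamma)$, whose mass at $\theta_0$ is at least $1/(1+\gamma)$, so that the Bayes-risk defect $\varepsilon/(1+\gamma)$ is at most $\pi(\{\theta_0\})\cdot\varepsilon$. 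That closing computation is exactly Stein's.

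The genuine weak point is the renormalization $\tilde\delta_n=\alpha_n\delta_n+(1-\alpha_n)\delta_0$: for weak compactness to apply to $(\tilde\delta_n)$ you need $\tilde\delta_n\in\decisions$, i.e.\ convexity of $\decisions$, and convexity is \emph{not} available here --- Stein's assumptions as formulated in the paper consist only of weak compactness and the game-value condition, and the standing assumptions on decision problems do not supply it either (in contrast to Theorem~\ref{t.sconv}, which assumes it explicitly). The step can, however, be eliminated using the standing assumption that the loss function is bounded below, say by $c$: then $\riskf_{\delta_0}(\theta_0,\delta)\ge c-\riskf(\theta_0,\delta_0)=:-M$ uniformly in $\delta\in\decisions$; admissibility rules out $\sup_\theta\riskf_{\delta_0}(\theta,\delta_n)\le 0$ exactly as you argue, whence $\riskf_{\delta_0}(\theta_0,\delta_n)<-\varepsilon_1$ and
\[
0<\sup_{\theta\in\parametersp}\riskf_{\delta_0}(\theta,\delta_n)<\frac{-\varepsilon_1-\riskf_{\delta_0}(\theta_0,\delta_n)}{\gamma_n}\le\frac{M}{\gamma_n}\longrightarrow 0,
\]
so weak compactness applies to $(\delta_n)$ itself and produces the dominating $\delta_\infty$, with no rescaling and hence no convexity needed. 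Incidentally, your stated motivation for the rescaling is backwards: arbitrarily negative values of $\riskf_{\delta_0}(\theta_0,\delta_n)$ would only strengthen the strict improvement at $\theta_0$; what they actually threaten is the convergence $\sup_\theta\riskf_{\delta_0}(\theta,\delta_n)\to0$, i.e.\ the weak domination at the remaining parameter points. One last technicality: to guarantee $\pi\in\priors$, take $\pi_\gamma$ to be a point mass $\dirac{\theta^*}$, which the pure-strategy (supremum over $\theta$) form of the game-value condition displayed in the paper permits; then $\pi$ has finite support and belongs to $\priors$ by the standing assumption that all finitely supported priors do, whereas for a general $\pi_\gamma\in\priors$ the mixture $\pi$ need not lie in $\priors$.
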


\subsection{Stein's condition with hyperpriors}
We now show how to derive a nonstandard version of Stein's characterization for arbitrary decision problems.
At the same time, we show that a nonstandard version Blyth's Condition also is necessary and sufficient for admissibility; 
here, Blyth's Conditions refers to a sufficient condition for admissibility used in what is often called Blyth's method (see \cite[Theorem~7.13, Theorem~8.7/Problem~7.12]{lehmann-casella}) and which is the basis of many proofs of admissibility in the literature.

\medskip

To state these theorems, it will be convenient to use the following terminology.

\begin{definition}\label{d.stein.blyth} Let $\aproblem = \decisionproblem$ be a decision problem. 
\begin{enumerate}
\item\label{d.i.B} We say that $\mathcal B$ is a \emph{determining family} (for the problem $\aproblem$) to mean that $\mathcal B$ is a family of non-empty subsets of $\parametersp$ such that for any $\delta_0, \delta_1 \in \decisions$, if there is $\theta_0 \in \parametersp$ such that
$\riskf(\theta_0,\delta_1) < \riskf(\theta_0,\delta_0)$, then there is $B\in \mathcal B$ and $\varepsilon \in \reals_{>0}$ such that
\[
(\forall \theta \in B)\; \riskf(\theta,\delta_1) < \riskf(\theta,\delta_0)-\varepsilon.
\]

\item\label{d.i.stein.B} Let $\mathcal B \subseteq \powerset(\parametersp)$. We say that \emph{$\delta_0 \in \decisions$ satisfies the nonstandard Stein's condition with respect to $\mathcal B$} if for all $B \in \mathcal B$ and all $\varepsilon \in \reals_{>0}$ there exists a hyperprior $\Pi \in \starmap{\priors}$ 
such that 
\[
\starmap{\riskf}(\Pi,\starmap{\delta_0}) - \inf_{\Delta \in \sigmamap{\decisions}} \starmap{\riskf}(\Pi,\Delta) \leq \Pi(\starmap B) \cdot \varepsilon
\]

\item\label{d.i.blyth.B} We say that \emph{$\delta_0 \in \decisions$ satisfies the nonstandard Blyth's condition with respect to $\mathcal B$} if there exists a hyperprior $\Pi \in \starmap{\priors}$ and  
$\tilde \rho \in \starmap\reals_{>0}$ such that for all $B \in \mathcal B$ there exists $C \in \reals_{>0}$ such that $\tilde\rho \leq  C\cdot\Pi(\starmap B)$ and
\[
\frac{\starmap{\riskf}(\Pi,\starmap{\delta_0}) - \inf_{\Delta \in \sigmamap{\decisions}} \starmap{\riskf}(\Pi,\Delta) }{\tilde \rho} \lessapprox 0
\] 

\item\label{d.i.noB} We say simply that \emph{$\delta_0 \in \decisions$ satisfies the nonstandard Stein's condition} (with no mention of a family $\mathcal B$) to mean that 
$\delta_0 \in \decisions$ satisfies the nonstandard Stein's condition with respect to the family $\mathcal B \subseteq \powerset(\parametersp)$ consisting of the singletons.
Likewise, to say that \emph{$\delta_0 \in \decisions$ satisfies the nonstandard Blyth's condition} means that
$\delta_0 \in \decisions$ satisfies the nonstandard Blyth's condition with respect to the family $\mathcal B \subseteq \powerset(\parametersp)$ consisting of the singletons.
\end{enumerate}
\end{definition}

\begin{theorem}\label{t.adm}
Let $\aproblem=\decisionproblem$  be a decision problem and $\mathcal B$ any determining family for the problem $\aproblem$.
The following are equivalent:
\begin{enumerate}
\item\label{i.adm} $\delta_0$ is admissible among $\decisions$.
\item\label{i.stein} $\delta_0$ satisfies the nonstandard Stein's condition. %
\item\label{i.stein.B} $\delta_0$ satisfies the nonstandard Stein's condition with respect to $\mathcal B$.
\item\label{i.blyth} $\delta_0$ satisfies the nonstandard Blyth's condition. %
\item\label{i.blyth.B}  $\delta_0$ satisfies the nonstandard Blyth's condition  with respect to $\mathcal B$. 
\end{enumerate}
\end{theorem}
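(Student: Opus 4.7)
My plan is to establish the cycle $(1)\Rightarrow(5)\Rightarrow(3)\Rightarrow(1)$ for an arbitrary determining family $\mathcal B$. Since the family $\{\{\theta\}:\theta\in\parametersp\}$ of singletons is trivially a determining family (given any strict risk inequality at some $\theta_0$, the singleton $\{\theta_0\}$ works with $\varepsilon$ slightly smaller than the gap), specializing $\mathcal B$ to the singletons turns $(3)$ into $(2)$ and $(5)$ into $(4)$, so the cycle simultaneously yields $(1)\Leftrightarrow(2)\Leftrightarrow(3)\Leftrightarrow(4)\Leftrightarrow(5)$.

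For $(1)\Rightarrow(5)$ I would apply Theorem~\ref{t.sconv}: admissibility of $\delta_0$ yields a hyperprior $\Pi\in\starmap{\priors}$ with $\Pi(\{\starmap\theta\})>0$ for every $\theta\in\parametersp$ such that $\starmap{\delta_0}$ is $\Pi$-$\starmap{}$Bayes among $\sigmamap{\decisions}$; in particular the shortfall $\starmap{\riskf}(\Pi,\starmap{\delta_0})-\inf_{\Delta\in\sigmamap{\decisions}}\starmap{\riskf}(\Pi,\Delta)$ is zero. To produce the scale $\tilde\rho$ I would use polysaturation: for each finite $F\subseteq\parametersp$ the internal set $\{\tilde\rho\in\starmap\reals_{>0}:\tilde\rho\leq\Pi(\{\starmap\theta\})\text{ for all }\theta\in F\}$ is nonempty (take the minimum), so a single $\tilde\rho>0$ exists bounded above by every $\Pi(\{\starmap\theta\})$, $\theta\in\parametersp$. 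For any nonempty $B\in\mathcal B$, picking $\theta\in B$ gives $\tilde\rho\leq\Pi(\{\starmap\theta\})\leq\Pi(\starmap B)$, so the standard constant $C=1$ works in $(5)$ and the shortfall ratio is $0\lessapprox 0$.

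The implication $(5)\Rightarrow(3)$ is immediate: given $(B,\varepsilon)$, reuse the pair $(\Pi,\tilde\rho)$ and the standard $C$ provided by $(5)$; since the shortfall ratio is infinitesimal it is at most $\varepsilon/C$, hence the shortfall is at most $(\varepsilon/C)\tilde\rho\leq\varepsilon\Pi(\starmap B)$, and moreover $\Pi(\starmap B)\geq\tilde\rho/C>0$. For $(3)\Rightarrow(1)$ I would argue by contradiction: if some $\delta_1\in\decisions$ dominates $\delta_0$, the determining property of $\mathcal B$ produces $B\in\mathcal B$ and $\varepsilon>0$ with $\riskf(\theta,\delta_1)<\riskf(\theta,\delta_0)-\varepsilon$ on $B$ and $\riskf(\theta,\delta_1)\leq\riskf(\theta,\delta_0)$ elsewhere. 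Applying $(3)$ to $(B,\varepsilon/2)$ yields a hyperprior $\Pi$ with $\Pi(\starmap B)>0$; integrating the pointwise domination (transferred to $\starmap\parametersp$) against $\Pi$ gives $\starmap{\riskf}(\Pi,\starmap{\delta_0})-\starmap{\riskf}(\Pi,\starmap{\delta_1})\geq\varepsilon\Pi(\starmap B)$, and since $\starmap{\delta_1}\in\sigmamap{\decisions}$, the shortfall is at least $\varepsilon\Pi(\starmap B)$, contradicting the Stein bound of $\varepsilon\Pi(\starmap B)/2$.

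The main obstacle I anticipate is the correct reading of the Stein-type inequality in $(3)$: as literally written it can be satisfied vacuously by a hyperprior that assigns zero mass to $\starmap B$ while happening to make $\delta_0$ exactly $\Pi$-Bayes, which would derail the argument $(3)\Rightarrow(1)$. The above plan relies on the convention---matching the classical Stein condition reproduced in Section~\ref{ss.stein}---that the witnessing $\Pi$ satisfies $\Pi(\starmap B)>0$, equivalently that the inequality is read as bounding the shortfall by $\varepsilon$ times a \emph{positive} $\Pi$-mass on $\starmap B$. A secondary caveat is that Theorem~\ref{t.sconv} assumes $\decisions$ to be convex; in the non-convex setting one should first pass to (the $\starmap{}$-convex hull of) $\decisions$ before invoking that theorem, and verify that admissibility of $\delta_0$ in the original $\decisions$ still yields the hyperprior needed on $\sigmamap{\decisions}$.
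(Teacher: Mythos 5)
Your proposal is correct and takes essentially the same route as the paper: the paper proves (\ref{i.adm})$\Rightarrow$(\ref{i.blyth}) via Theorem~\ref{t.sconv}, taking $\tilde\rho$ to be the (hyperfinite) minimum of $\Pi$ over its support where you use saturation --- both work --- and then runs (\ref{i.blyth})$\Rightarrow$(\ref{i.blyth.B})$\Rightarrow$(\ref{i.stein.B})$\Rightarrow$(\ref{i.adm}), which is your cycle up to relabelling and specialization to singletons. The two caveats you flag are genuine imprecisions of the paper, not defects of your argument, and you resolve them the right way. First, Definition~\ref{d.stein.blyth}(\ref{d.i.stein.B}) as literally written omits the positivity requirement $\Pi(\starmap B)>0$ that the classical Stein condition carries; without it the condition can be satisfied by an inadmissible $\delta_0$ (one that is exactly $\starmap{}$Bayes for a hyperprior giving $\starmap B$ mass zero), so positivity must indeed be read into the definition. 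The paper's own proof of (\ref{i.stein.B})$\Rightarrow$(\ref{i.adm}) uses this silently: its displayed chain produces the shortfall bound $\geq \Pi(\starmap B)\cdot\varepsilon$ against the Stein bound $\leq \Pi(\starmap B)\cdot\varepsilon$, which is a contradiction only when the comparison is strict, i.e.\ when $\Pi(\starmap B)>0$ and the strict pointwise risk gap survives integration; your device of invoking the condition with $\varepsilon/2$ is cleaner, since it needs positivity but neither strictness under the integral nor atomicity of $\Pi$. Second, you are also right that the theorem statement omits the convexity of $\decisions$ assumed in Theorem~\ref{t.sconv}; the paper's proof of (\ref{i.adm})$\Rightarrow$(\ref{i.blyth}) inherits that hypothesis, so it should be regarded as a standing assumption of the theorem rather than something your proof must circumvent.
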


\begin{proof}
(\ref{i.adm}) $\Rightarrow$ (\ref{i.blyth}): Since $\delta_0$ is admissible and by Theorem~\ref{t.hyperprior.admissibility} we can find a hyperprior $\Pi$ with hyperfinite support 
such that 
$\starmap{\delta_0}$ is $0$-$\starmap{}$Bayes among $\sigmamap{\decisions}$ and $\parametersp \subseteq \supp(\Pi)$. 
Let $\tilde \rho = \min\{\Pi(\theta) \setdef \theta \in \supp(\Pi)\}$.
Since also
\[
\frac{\starmap{\riskf}(\Pi,\starmap{\delta_0}) - \inf_{\Delta \in \sigmamap{\decisions}} \starmap{\riskf}(\Pi,\Delta) }{\tilde \rho}=0,
\] 
clearly $\delta_0$ satisfies the nonstandard Blyth condition (that is, it satisfies the nonstandard Blyth condition with respect to the family of singletons from $\parametersp$).

(\ref{i.blyth}) $\Rightarrow$ (\ref{i.blyth.B}): 
Suppose that $\Pi$ and $\tilde \rho$ witness the nonstandard Blyth condition with respect to a family $\mathcal B'$ of subsets of $\parametersp$ and that for any $B\in \mathcal B$ there is $B' \in \mathcal B'$ such that $B' \subseteq B$.
Then $\Pi$ and $\tilde \rho$ witness the nonstandard Blyth condition with respect to the family $\mathcal B$.
In particular, if $\Pi$ and $\tilde \rho$ witness the nonstandard Blyth condition (with respect to the set consisting of singletons from $\parametersp$) the same is true with respect to $\mathcal B$.

(\ref{i.blyth}) $\Rightarrow$ (\ref{i.stein}) and (\ref{i.blyth.B}) $\Rightarrow$ (\ref{i.stein.B}): 
It is enough to show the second implcation; the first is the special case of the second where we take $\mathcal B$ equal to the set of singletons from $\parametersp$. 
So let $\Pi$ and $\tilde \rho$ 
witness the nonstandard Blyth's condition for $\mathcal B$.
Then given any $B \in \mathcal B$ we can find a positive infinitesimal $\tilde\varepsilon$ and $C\in\reals_{>0}$ such that
\[
\starmap{\riskf}(\Pi,\starmap{\delta_0}) - \inf_{\Delta \in \sigmamap{\decisions}} \starmap{\riskf}(\Pi,\Delta) \leq\tilde \rho \cdot \tilde\varepsilon \leq \Pi(\starmap B) \cdot C \cdot  \tilde\varepsilon 
\] 
Since for any $\varepsilon \in\reals^{>0}$ we have $C\tilde\varepsilon <\varepsilon$, Stein's condition holds for $\mathcal B$.

(\ref{i.stein}) $\Rightarrow$ (\ref{i.adm})  and (\ref{i.stein.B}) $\Rightarrow$ (\ref{i.adm}): As above, it is enough to prove the second implication.
Although $\Pi$ is not a standard prior, the usual proof goes through almost verbatim:
Suppose towards a contradiction that $\delta \in \decisions$ and $\delta \prec \delta_0$. 
Since $\mathcal B$ is a determining family, we can find $B \in \mathcal B$ and $\varepsilon \in \reals^{>0}$ such that $\riskf(\theta,\delta)<\riskf(\theta,\delta_0) - \varepsilon$ for every $\theta \in B$.
Find a hyperprior $\Pi$ witnessing (\ref{i.stein.B}) for
$\varepsilon$.
By choice of $B$ and $\varepsilon$, we have
 \begin{multline*} 
\starmap{\riskf}(\Pi,\starmap{\delta_0}) - \inf_{\Delta \in \sigmamap{\decisions}} \starmap{\riskf}(\Pi,\Delta)
\geq
\starmap{\riskf}(\Pi,\starmap{\delta_0}) -  \starmap{\riskf}(\Pi,\starmap{\delta})
=
\starmap{\riskf}_{\starmap{\delta}}(\Pi,\starmap{\delta_0})
= \\
\Pi(\{\starmap\theta_0\}) \cdot \riskf_{\delta}(\theta_0,\delta_0) 
+ \starmap{\riskf}_{\starmap{\delta}}(\Pi \cdot 1_{\starmap{\parametersp}\setminus\{\theta_0\}},\starmap{\delta_0})
\geq \\
\Pi(\{\starmap\theta_0\}) \cdot \riskf_{\delta}(\theta_0,\delta_0)  = 
 \Pi(\{\starmap\theta_0\}) \cdot \varepsilon
\end{multline*}
where the last inequality holds since by transfer, $\starmap\riskf_{\starmap\delta}(\tilde\theta,\starmap\delta_0) \geq 0$ for all $\tilde\theta \in \starmap\parametersp$.
This contradicts that $\Pi$ should witness (\ref{i.stein.B}).
\end{proof}

\section{Examples and Applications}

\subsection{Relative admissibility of the Graybill-Deal estimator}\label{s.GD}

Let $n>1$ and consider the problem of estimating the common mean $\mu$ of random variables $X^{(1)}_1, \hdots, X^{(n)}_1$ and 
$X^{(1)}_2, \hdots, X^{(n)}_2$, where for each $i\in\{1,2\}$ the random variables $X^{(1)}_i, \hdots, X^{(n)}_i$ are i.i.d.\ 
according to $\normaldist(\mu,\sigma_i)$ with unknown variance $\sigma^2_i$.
Consider the estimator 
\[
\gdest = \left(\sum_{i=1}^{m}\frac{n}{S^2_{i}}\right)^{-1} \cdot
\sum_{i=1}^{2} \frac{n}{S^2_{i}}\bar X^i
\]
where of course $\bar X_i$ is the mean of $X^{(1)}_i, \hdots, X^{(n)}_i$, and where
\[
S^{2}_{i} := \frac{1}{n-1} \sum_{j=1}^{n}(X^{(j)}_i - \bar X_i)^2
\]
This is 
a special case of the Graybill-Deal estimator, proposed in \cite{graybill-deal} . 

In \cite{sinha1982} it is asked whether $\gdest$ is admissible in the following class $\mathcal C_1$, namely the class consisting of estimators 
\begin{equation}\label{est.in.C1}
\est = \bar X_1 + D\cdot \hat\phi\left(S^2_1, S^2_2\right)
\end{equation}
where $\hat\phi\colon (\reals_{>0})^2 \to [0,1]$ is an arbitrary function.
We shall presently show this is indeed the case. %

We start our discussion by reviewing some facts not involving any nonstandard analysis. 
Clearly, $\gdest$ is obtained by choosing 
\[
\hat\phi_{\text{GD}}(S^2_1,S^2_2) = \frac{S^2_1}{S^2_1+S^2_2}
\]
for $\hat\phi$ in \eqref{est.in.C1} and is thus seen to arise from a plug-in estimate for $\theta'$ based on estimating $\sigma^2_i$ by $S^2_i$.

Using see \cite[Lemma~2.1, p.~1605]{sinha1982} the risk of any estimator $\est \in \mathcal C_1$ given as in \eqref{est.in.C1}, for fixed $\theta = (\mu,\sigma^2_1, \sigma^2_2)$, is found to be
\begin{multline}\label{e.risk.sinha}
\riskf(\theta,\est) = \bias(\est)^2 + \var(\est) = \\
\underbrace{\expectation\left[D\cdot\hat \phi(S^2_1, S^2_2)\right]^2}_{=0}
+ 
\frac{\sigma_1^2\sigma_2^2}{n(\sigma_1^2+\sigma_2^2)} + 
\frac{\sigma_1^2 + \sigma_2^2}{n}
\cdot
\expectation \left[\hat\phi(S^2_1, S^2_2) - 
\frac{\sigma_1^2}{\sigma_1^2+\sigma_2^2}\right]^2
\end{multline}
where $\expectation\left[D\cdot\hat \phi\right]=0$ because $D$, $S^2_1$, and $S^2_2$ are independent and 
$\expectation(D)=0$.
Therefore  given %
a pair of estimators $\est_i = \bar X_1 + D\cdot \hat\phi_i\left(S^2_1, S^2_2\right)$ from $\mathcal C_1$, where $i\in\{0,1\}$, 
\begin{multline}\label{e.risk.diff1}
\riskf(\theta,\est_0) - \riskf(\theta,\est_1) =  \\
\expectation \left[
\frac{\sigma_1^2 + \sigma_2^2}{n}
\cdot 
\left(\hat\phi_0(S^2_1, S^2_2) - 
\frac{\sigma_1^2}{\sigma_1^2+\sigma_2^2}\right)^2 
-
\left(\hat\phi_1(S^2_1, S^2_2) - 
\frac{\sigma_1^2}{\sigma_1^2+\sigma_2^2}\right)^2
\right]
\end{multline}
In other words, as was already observed in \cite{sinha1982}, the relative risk of $\est_0, \est_1$ from $\mathcal C_1$ is determined by the relative performance of $\phi_0$ and $\phi_1$ in the related problem of estimating $\theta' :=\frac{\sigma_1^2}{\sigma_1^2+\sigma_2^2}$
from the data $S^2_1$ and $S^2_2$ with respect to the loss function given by
\[
\lossf'(\theta,\phi)= \frac{\sigma_1^2 + \sigma_2^2}{n} (\phi - \theta')^2.
\]
 We turn our attention to this latter problem.
Fix a positive (standard) real number $\alpha < \frac12$. 

Now bringing to bear ideas involving the use of infinitesimals, fix a positive infinitesimal $\nsparam$ and 
consider the hyperprior $\Pi$ for $(\sigma^2_1,\sigma^2_2)$ given by the (image under the star map of the) independent product of two inverse gamma distributions with shape $\alpha$ and scale $\nsparam$:
\begin{equation}\label{e.dist.sigma1}
\begin{gathered}
(\sigma^2_1,\sigma^2_2) \sim \invgamma(\alpha,\nsparam) \otimes \invgamma(\alpha,\nsparam), \\
\gammadist(x; \alpha,\nsparam) \propto \left(\frac1x\right)^{\alpha+1} \exp\left(\frac{\nsparam}x\right) 
\end{gathered}
\end{equation}
Since risk functions are continuous %
in the decision problem under consideration, the collection %
\[
\mathcal B := \{ O \subseteq \reals^2  \setdef \text{ $O$ open, $\bar O \subseteq(\reals_{>0})^2$}, \lambda(O)<\infty \} 
\]
is a determining family.

\begin{claim}
For any $O \in \mathcal B$ there is $C \in \reals$ such that $\Pi(\starmap O) > C \cdot \nsparam^{2\alpha}$.
\end{claim}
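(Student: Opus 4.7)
The plan is to give a direct lower bound on the measure $\Pi(\starmap O)$ by writing down its density and bounding each factor on $\starmap O$. From \eqref{e.dist.sigma1}, $\Pi$ has density with respect to (the nonstandard extension of) Lebesgue measure of the form
\[
p(x,y) \;=\; \frac{\nsparam^{2\alpha}}{\Gamma(\alpha)^2}\,(xy)^{-\alpha-1}\exp\bigl(-\nsparam/x - \nsparam/y\bigr),
\]
so the task reduces to showing that the remaining factors contribute at least a positive standard constant multiple of $\starmap\lambda(\starmap O)$.

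Because $O \in \mathcal B$, its closure $\bar O$ is a compact subset of $(\reals_{>0})^2$; choose standard reals $0 < a \leq b$ with $\bar O \subseteq [a,b]^2$, and transfer this inclusion to conclude that for every $(x,y) \in \starmap O$ one has $a \leq x, y \leq b$ in $\starmap\reals$. On this hyperrectangle the polynomial factor satisfies $(xy)^{-\alpha-1} \geq b^{-2\alpha-2}$, while $\nsparam/x,\nsparam/y \leq \nsparam/a$ are positive infinitesimals, so $\exp(-\nsparam/x - \nsparam/y) \approx 1$ and in particular is at least $1/2$.

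Combining these bounds yields $p(x,y) \geq c_0\, \nsparam^{2\alpha}$ on $\starmap O$, for the standard positive constant $c_0 := \frac{1}{2\Gamma(\alpha)^2 b^{2\alpha+2}}$. Integrating against $\starmap\lambda$, using transfer to identify $\starmap\lambda(\starmap O) = \lambda(O)$, and observing that $\lambda(O) > 0$ because $O$ is a non-empty open subset of $\reals^2$, I obtain $\Pi(\starmap O) \geq c_0\lambda(O)\cdot \nsparam^{2\alpha}$, from which the claim follows by taking $C$ strictly less than $c_0\lambda(O)$.

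There is no substantive obstacle: the argument is just an infinitesimal calculation. The only point requiring a little care is the use of transfer to promote the compactness-derived standard bounds on $\bar O$ to uniform internal bounds on $\starmap O$, together with the observation that although $\nsparam$ is non-standard the exponential factor is still harmlessly close to $1$.
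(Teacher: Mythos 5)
Your overall strategy is exactly the paper's: lower-bound the product inverse-gamma density pointwise on $\starmap{O}$ by a positive standard constant times $\nsparam^{2\alpha}$ (the exponential factor being infinitesimally close to $1$, the polynomial factor bounded using the geometry of $O$), then integrate. In the regime where it applies, your version is in fact tidier than the paper's: you correctly use an \emph{upper} bound $b$ on the coordinates to control $(xy)^{-\alpha-1}$, whereas the paper bounds $(1/\sigma_i^2)^{\alpha+1}$ below by $\varepsilon^{\alpha+1}$ with $\varepsilon$ the \emph{infimum} of the coordinate projections, an inequality that only goes the right way when $\sigma_i^2 \leq 1/\varepsilon$, i.e.\ the paper tacitly treats $O$ as sitting inside a box.

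However, there is a genuine gap in your first step: membership in $\mathcal B$ does \emph{not} make $\bar O$ compact. The definition only demands that $O$ be open, that $\bar O \subseteq (\reals_{>0})^2$, and that $\lambda(O) < \infty$; the set $O = \bigcup_{n \geq 1} (n, n+2^{-n}) \times (1,2)$ satisfies all three and is unbounded, so there are no standard $a \leq b$ with $\bar O \subseteq [a,b]^2$. For such an $O$ your argument breaks down irreparably in its present form: the factor $(xy)^{-\alpha-1}$ has infimum $0$ over $O$, so no uniform bound $p(x,y) \geq c_0 \nsparam^{2\alpha}$ with standard $c_0 > 0$ holds on all of $\starmap{O}$. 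The statement itself is still true, and the repair is one line: by monotonicity of $\Pi$ it suffices to bound $\Pi(\starmap{O'})$ below for some \emph{bounded} open $O' \subseteq O$ with $\lambda(O') > 0$ (say $O' = O \cap (0,R)^2$ with $R$ large enough that $\lambda(O') > 0$, noting $\bar{O'} \subseteq \bar O \subseteq (\reals_{>0})^2$); your computation then applies verbatim to $O'$, giving $\Pi(\starmap{O}) \geq \Pi(\starmap{O'}) \geq c_0 \lambda(O') \nsparam^{2\alpha}$. With that patch your proof is complete; note that the same patch is needed to make the paper's own proof cover unbounded members of $\mathcal B$.
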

\begin{proof}
Let $O \in \mathcal B$ be given. 
Since by choice of $\mathcal B$,
\[
\varepsilon := \inf(\pr_0[O]\cup \pr_1[O])
\]
is a non-infinitesimal positive number and since $\nsparam \approx 0$, it holds for each $(\sigma^2_1,\sigma^2_2) \in O$ and $i\in\{0,1\}$ that $1 \approx \exp(\frac{\nsparam}{\sigma^2_i}) > 0.5$ and 
\begin{gather*}
\pi(O) = \iint_{\starmap O} \prod_{i=1,2}  \frac{{\nsparam}^\alpha}{\Gamma(\alpha)} e^{-\frac{\lambda_i}{2\sigma_i^2}} \left(\frac{1}{\sigma_i^2}\right)^{\alpha + 1} \;\mathrm{d}\sigma^2_i
\\
> \iint_{\starmap O} \prod_{i=1,2}  \frac{{\nsparam}^\alpha}{\Gamma(\alpha)} \frac12  \varepsilon^{\alpha + 1} \;\mathrm{d}\sigma^2_i =
\underbrace{\frac{\varepsilon^{2(\alpha+1)} \lambda(O) }{4\Gamma(\alpha)^2}}_{=C} \cdot \nsparam^{2\alpha} 
\end{gather*}
proving the lemma with $C$ chosen as indicated above.
\end{proof}
Let 
\begin{equation*}
\hat \Phi(S^2_1, S^2_2) = \frac{S^2_1 + \frac{2}{n-1}\nsparam}{S^2_1+S^2_2 + \frac{4}{n-1}\nsparam}.
\end{equation*}
and write $\Delta_\Pi$ for the hyperprocedure determined by setting $\hat\phi = \hat\Phi$ in \eqref{est.in.C1}. 
It is not hard to see that $\Delta_\Pi$ has minimal $\starmap{}$Bayes risk under $\Pi$ among $\starmap{\mathcal C}_1$ whenever $n>1$ (this was also observed in \cite{sinha1982}).

\begin{claim}
There exist $C' \in \reals_{>0}$ such that excess Bayes risk of $\starmap\gdest$ under $\Pi$ is at most $C'\cdot \nsparam$.
\end{claim}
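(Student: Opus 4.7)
The plan is to exploit the decomposition in \eqref{e.risk.sinha}: I first observe that the $\hat\phi$-independent term $\sigma^2_1\sigma^2_2/(n(\sigma^2_1+\sigma^2_2))$ cancels when one considers the excess Bayes risk---which, since both individual $\Pi$-Bayes risks are apt to be $\star$-infinite under $\Pi$ (see the obstacle below), I interpret throughout as $\int\bigl(\starmap{\riskf}(\theta,\starmap\gdest) - \starmap{\riskf}(\theta,\Delta_\Pi)\bigr)\,\Pi(\mathrm{d}\theta)$. Applying Fubini (internally) and the standard $L^2$-projection identity (a Bayes-optimal estimator under weighted squared error is the weighted posterior mean, with excess risk equal to the posterior-expected weight times the squared deviation), I reduce the expression to
\[
E = \expectation_S\Bigl[\expectation_\Pi[w(\theta)\mid S] \cdot \bigl(\hat\phi_{\text{GD}}(S) - \hat\Phi(S)\bigr)^2\Bigr],
\]
where $w(\theta) := (\sigma^2_1+\sigma^2_2)/n$ and the outer expectation is with respect to the $\Pi$-marginal of $S = (S^2_1, S^2_2)$.

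Next I would compute the two factors explicitly. Elementary algebra, using the identity $\tfrac{4}{n-1} = 2c$ where $c := \tfrac{2}{n-1}$, yields
\[
\hat\phi_{\text{GD}}(S) - \hat\Phi(S) = \frac{c\nsparam(S^2_1 - S^2_2)}{(S^2_1+S^2_2)(S^2_1+S^2_2+2c\nsparam)}.
\]
Inverse-gamma conjugacy applied internally to \eqref{e.dist.sigma1} gives $\sigma^2_i \mid S^2_i \sim \invgamma((n-1)/2 + \alpha,\, (n-1)S^2_i/2 + \nsparam)$ independently, so (provided $n + 2\alpha > 3$, which holds e.g.\ for $n \geq 3$) the posterior mean of $\sigma^2_i$ equals $((n-1)S^2_i + 2\nsparam)/(n-3+2\alpha)$. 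Summing,
\[
\expectation_\Pi[w(\theta)\mid S] = \frac{(n-1)(S^2_1 + S^2_2) + 4\nsparam}{n(n - 3 + 2\alpha)}.
\]

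Combining these, and using the crude bound $(S^2_1 - S^2_2)^2 \leq (S^2_1 + S^2_2)^2$ to discard one factor, the integrand is at most
\[
\frac{c^2\nsparam^2\bigl((n-1)(S^2_1 + S^2_2) + 4\nsparam\bigr)}{n(n-3+2\alpha)(S^2_1 + S^2_2 + 2c\nsparam)^2}.
\]
A one-variable calculus argument---the derivative of $u \mapsto ((n-1)u + 4\nsparam)/(u + 2c\nsparam)^2$ is nonpositive on $[0,\infty)$ because $c(n-1) = 2$---shows this function attains its maximum $1/(c^2\nsparam)$ at $u = 0$. Substituting $u = S^2_1 + S^2_2$, the displayed bound is at most $\nsparam/(n(n-3+2\alpha))$ \emph{uniformly in $S$}, and integrating against the $\Pi$-marginal (a probability measure) gives $E \leq C'\nsparam$ with $C' := 1/(n(n-3+2\alpha))$, as claimed.

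The hard part is setting up the reduction cleanly: since $\expectation_\Pi[\sigma^2_i] = \infty$ when $\alpha < 1/2$, the individual Bayes risks of $\gdest$ and $\Delta_\Pi$ under $\Pi$ are in fact $\star$-infinite, so the excess cannot be computed by literal subtraction. One must instead perform the cancellation at the level of the joint integral via an internal Fubini, using that the divergent component is exactly the $\hat\phi$-independent term in \eqref{e.risk.sinha}.
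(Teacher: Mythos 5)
Your computation is correct and, at its core, follows the same route as the paper's proof: both pass to the iterated integral (outer expectation over $S=(S^2_1,S^2_2)$, inner over the posterior), both use inverse-gamma conjugacy to identify $\hat\Phi$ as the weighted posterior mean, and both arrive at the same closed form for the excess risk --- your $E$ is exactly the quantity in \eqref{e.risk.diff2}--\eqref{e.risk.diff3} (where the paper's display \eqref{e.risk.diff3} carries typos: the numerator should be $\nsparam^2$, not $\nsparam$). You part ways only at the final bounding step. The paper computes the predictive marginal $\bar\Pi$ explicitly as a product of compound-gamma laws, substitutes $U_i=2\nsparam/(2\nsparam+S^2_i)$, cites that $(U_1,U_2)$ are i.i.d.\ beta distributed, and bounds $2\nsparam\cdot\expectation\bigl[U_1/(1+U_1/U_2)\bigr]\le 2\nsparam$. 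You instead observe that, after $(S^2_1-S^2_2)^2\le(S^2_1+S^2_2)^2$, the identity $c(n-1)=2$ collapses $((n-1)u+4\nsparam)/(u+2c\nsparam)^2$ to $(n-1)/(u+2c\nsparam)$, so the integrand is bounded by $\nsparam/(n(n-3+2\alpha))$ \emph{uniformly in $S$}; the law of $S$ then plays no role at all. Your route is simpler and self-contained (it eliminates the compound-gamma computation and the external citation for the beta fact) and produces an explicit constant; the paper's route yields, in exchange, an exact distributional identity that the claim does not require. Your proviso $n-3+2\alpha>0$ is also honest bookkeeping: the paper's posterior-mean formula needs the same condition $\alpha'>1$, which silently excludes $n=2$ when $\alpha<\tfrac12$.

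The one flaw is in your justification of the Fubini step, which you yourself single out as the delicate point. Your working definition of the excess --- the $\Pi$-integral of the pointwise risk difference --- is the right one, and the paper tacitly uses it too (its individual $\starmap{}$Bayes risks are indeed $\starmap{}$infinite, as you say). But it is \emph{not} true that the divergent component is exactly the $\hat\phi$-independent term of \eqref{e.risk.sinha}: for $\hat\phi\in\{\hat\phi_{\text{GD}},\hat\Phi\}$ the variance term $\tfrac{\sigma_1^2+\sigma_2^2}{n}\expectation[(\hat\phi-\theta')^2]$ also has infinite $\Pi$-integral, because on the event $\{\sigma^2_1/\sigma^2_2\in[1/2,2],\ \sigma^2_1,\sigma^2_2\ge 1\}$ the expected squared error of either estimator is bounded below by a non-infinitesimal constant (both are scale-invariant up to corrections of order $\nsparam$), while $\expectation_\Pi\bigl[(\sigma^2_1+\sigma^2_2)\mathbf{1}_{\{\sigma^2_1/\sigma^2_2\in[1/2,2],\,\sigma^2_i\ge1\}}\bigr]=\infty$ whenever $2\alpha<1$. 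So removing the $\hat\phi$-independent term does not leave two separately integrable pieces, and Fubini cannot be applied to them one at a time. What actually licenses the exchange is that the \emph{difference} is jointly absolutely integrable: the joint integrand is bounded in absolute value by $2w\,|\hat\phi_{\text{GD}}-\hat\Phi|$, whose posterior-first iterated integral is, by your own two computations, $\tfrac{2(m_1+m_2)}{n}\cdot\tfrac{2\nsparam}{(n-1)(S^2_1+S^2_2)+4\nsparam}=\tfrac{4\nsparam}{n(n-3+2\alpha)}$ uniformly in $S$; Tonelli then gives joint integrability, and Fubini the exchange. This repairs the step using estimates you already derived, but as written your justification of the crucial cancellation is incorrect.
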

\begin{proof}
We see from \eqref{e.risk.diff1} by straightforward calculation and exchanging the order of integration that
\begin{multline}\label{e.risk.diff2}
\riskf(\theta,\est_0) - \riskf(\theta,\est_1) =  \\
\expectation_{S^2_1, S^2_2 \sim \bar \Pi} \expectation_{\sigma^2_1, \sigma^2_2 \sim \Pi'} \left[
\frac{\sigma_1^2 + \sigma_2^2}{n}
\cdot 
\left(\hat\phi_0(\vec S)^2 - \hat\phi_1(\vec S)^2 \right) 
-
\frac{2\sigma^2_1}{n} \left(\hat\phi_0(\vec S) - \hat\phi_1(\vec S)\right)
\right]
\end{multline}
where we write $\phi_i(\vec S)$ as a short-hand for $\phi_i(S^2_1, S^2_2)$, and where the inner expectation is taken conditional on $S^2_1, S^2_2$ with $\sigma^2_1, \sigma^2_2$ treated as random variables following the posterior distribution here denoted by $\Pi'$, while the outer expectation is taken over $S^2_1, S^2_2$ which follow the
predictive prior distribution, which we denote by $\bar \Pi$.
By \eqref{e.dist.sigma1} the posterior distribution $\Pi'$ is a product of a pair of inverse gamma distributions: 
\begin{equation*}
(\sigma^2_1,\sigma^2_2) \sim \Pi'_1 \otimes \Pi'_2,
\end{equation*}
where for $i\in\{1,2\}$,
\begin{equation*}
\Pi'_i(x; \alpha',\nsparam'_i) \propto \left(\frac1x\right)^{\alpha'+1} \exp\left(\frac{\nsparam'_i}x\right)
\end{equation*}
with shape
\[
\alpha' = \alpha + \frac{n-1}2
\]
and scale
\[
\nsparam'_i = \nsparam + \frac{S^2_i}2
\]
and therefore with mean
\[
\expectation_{\sigma^2_i \sim \Pi'} \sigma^2_i = \frac{\nsparam'_i}{\alpha'-1} = \frac{\lambda + \frac{S^2_i}2}{\alpha + \frac{n-1}2-1}.
\]
From this and \eqref{e.risk.diff2}, a somewhat lengthy but straightforward calculation shows:
\begin{multline}\label{e.risk.diff3}
\riskf(\theta,\est_0) - \riskf(\theta,\est_1) =  
\expectation_{S^2_1, S^2_2 \sim \bar \Pi} 
\frac{4\nsparam (S^2_1 - S^2_2)^2}
{n(2\alpha + n -3)(S^2_1 + S^2_2)^2 (4\nsparam + S^2_2 + S^2_2)}
\leq \\
\expectation_{S^2_1, S^2_2 \sim \bar \Pi}  \frac{4\nsparam^2}{4\nsparam + S^2_2 + S^2_2}
\end{multline}
The predictive prior distribution $\bar\Pi$ on the other hand is an independent product of a pair of compound gamma distributions:
\begin{gather*}
(S^2_1, S^2_2) \sim \bar \Pi_1 \otimes \bar \Pi_2,\\
\bar \Pi_i(x;\alpha_0,\alpha_1,\tilde\beta_i) 
= 
 \int_{\reals_{>0}} \gammadist(x;\alpha_0,\beta)\gammadist(\mathrm{d}\beta; \alpha_1,\tilde\beta_i)
\\
\propto 
\int_{\reals_{>0}} \beta^{\alpha_0} e^{-\beta x} x^{\alpha_0 -1} e^{- \tilde\beta_i \beta} \beta^{\alpha_1 -1} \;\mathrm{d}\beta
\end{gather*}
with $\alpha_0 = \frac{n-1}{2}$, $\alpha_1 = \alpha$ and $\nsparam_i = \nsparam$ for each $i\in\{1,2\}$. 

Introduce new random variables $U_1, U_2$ by
\[
U_i := \frac{2\nsparam}{2\nsparam + S^2_i}
\]
for $i\in\{1,2\}$.
Then, as is easily verified,
\[
\frac{U_1}{1 + \frac{U_1}{U_2}}  = 
\frac{U_1 U_2}{U_1 + U_2} = \frac{4\nsparam}
{2\nsparam + S^2_2 + S^2_2}
\]
and moreover, $(U_1,U_2)$ are i.i.d.\ beta distributed (see \cite[p.~28,~Item~i)]{dubey}).
Thus we can continue \eqref{e.risk.diff3} as follows:
\begin{equation*}
\riskf(\theta,\est_0) - \riskf(\theta,\est_1) 
=  
\expectation_{S^2_1, S^2_2 \sim \bar \Pi}  \frac{4\nsparam^2}{4\nsparam + S^2_2 + S^2_2} 
= %
2\nsparam \cdot \expectation \frac{U_1}{1 + \frac{U_1}{U_2}} \leq 2 \nsparam
\end{equation*}
which proves the claim.
\end{proof}

\begin{corollary}\label{c.GD}
The Graybill-Deal estimator $\gdest$ is admissible among $\mathcal C_1$.
\end{corollary}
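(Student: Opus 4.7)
The plan is to apply Theorem~\ref{t.adm}, using the equivalence between admissibility and the nonstandard Blyth condition with respect to the determining family $\mathcal B$. The witnessing hyperprior will be the $\Pi$ constructed in \eqref{e.dist.sigma1}, and the key choice is the infinitesimal scale $\tilde\rho := \nsparam^{2\alpha}$, calibrated precisely to the lower bound on $\Pi(\starmap B)$ supplied by the first claim.

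First I would verify that $\mathcal B$ really is a determining family for the problem of estimating $\mu$ with procedures from $\mathcal C_1$. By \eqref{e.risk.sinha} the risk of each $\est \in \mathcal C_1$ depends continuously on $(\sigma_1^2,\sigma_2^2) \in (\reals_{>0})^2$ and is independent of $\mu$, so any strict inequality $\riskf(\theta_0,\est_1)<\riskf(\theta_0,\est_0)$ persists, with a uniform positive gap, on an open neighbourhood of the $(\sigma_1^2,\sigma_2^2)$-coordinates of $\theta_0$; this neighbourhood may then be shrunk to one with compact closure inside $(\reals_{>0})^2$ and hence finite Lebesgue measure, placing it in $\mathcal B$.

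Next I would combine the two claims established above. The first claim furnishes, for every $B \in \mathcal B$, a standard $C \in \reals_{>0}$ with $\tilde\rho = \nsparam^{2\alpha} \leq C \cdot \Pi(\starmap B)$, fulfilling the first half of the nonstandard Blyth condition. For the second half, recall that $\Delta_\Pi$ attains the infimum of $\starmap{}$Bayes risk over all of $\starmap{\mathcal C_1}$, hence a fortiori over the subclass $\sigmamap{\mathcal C_1}$; so the second claim yields
\[
\frac{\starmap\riskf(\Pi,\starmap\gdest) - \inf_{\Delta \in \sigmamap{\mathcal C_1}} \starmap\riskf(\Pi,\Delta)}{\tilde\rho} \leq \frac{C' \cdot \nsparam}{\nsparam^{2\alpha}} = C' \cdot \nsparam^{1-2\alpha} \approx 0,
\]
where the final step relies on the standing assumption $\alpha < \tfrac12$, which forces $1-2\alpha > 0$. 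Invoking Theorem~\ref{t.adm} (specifically the implication that the nonstandard Blyth condition with respect to $\mathcal B$ suffices for admissibility) then delivers that $\gdest$ is admissible in $\mathcal C_1$.

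The main obstacle, insofar as there is one at this stage, is the balancing act in the choice of $\tilde\rho$: the hyperprior $\Pi$ concentrates mass of order $\nsparam^{2\alpha}$ on any fixed standard region $B$, while the excess $\starmap{}$Bayes risk of $\starmap\gdest$ under $\Pi$ is of order $\nsparam$, so Blyth's requirement that the ratio be infinitesimal constrains $\alpha$ precisely to the range $(0,\tfrac12)$. Once this trade-off is identified, the remainder is a direct assembly of the ingredients already prepared in the preceding two claims.
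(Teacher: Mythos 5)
Your proposal is correct and follows essentially the same route as the paper: both take $\tilde\rho = \nsparam^{2\alpha}$, combine the two claims (the lower bound $\Pi(\starmap B) > C\cdot\nsparam^{2\alpha}$ and the excess $\starmap{}$Bayes risk bound $C'\cdot\nsparam$), and conclude via the implication (\ref{i.blyth.B}) $\Rightarrow$ (\ref{i.adm}) of Theorem~\ref{t.adm}, using $\alpha < \tfrac12$ to make the ratio infinitesimal. Your added verification that $\mathcal B$ is a determining family and that $\Delta_\Pi$ minimizes over $\sigmamap{\mathcal C_1}$ merely spells out steps the paper leaves implicit.
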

\begin{proof}
By the previous claims, firstly $\starmap\riskf(\Pi,\starmap{\gdest}) - \min_{\Delta \in \sigmamap\decisions} \starmap\riskf(\Pi,\Delta) \leq C'\cdot \nsparam$ , where $C' \in \reals_{>0}$ and secondly, for any $B\in\mathcal B$ there is $C\in \reals_{>0}$ such that $\Pi(B) > C\cdot \nsparam^{2\alpha}$.
The claim therefore follows by (\ref{i.blyth.B}) $\Rightarrow$ (\ref{i.adm}) in Theorem~\ref{t.adm}, as $\delta_0$ satisfies the nonstandard Blyth condition for $\mathcal B$:
\[
\frac{\starmap\riskf(\Pi,\starmap{\gdest}) - \min_{\Delta \in \sigmamap\decisions}\starmap\riskf(\Pi,\Delta)} {\nsparam^{2\alpha}} 
\leq
\frac{\starmap\riskf(\Pi,\starmap{\gdest}) - \starmap\riskf(\Pi,\Delta_\Pi)} {\nsparam^{2\alpha}} 
\leq 
\frac{C}{C'}\cdot \frac{\nsparam}{\nsparam^{2\alpha}} \approx 0
\]
where the last relation holds by choice of $\alpha$ since $1- 2 \alpha\in\reals^{>0}$ and so $\nsparam^{1-2\alpha} \approx 0$.
\end{proof}

\subsection{Finitely additive priors}

For the following application, 
we ask the reader to recall (from \cite[Section~7]{duanmuroy} or from \cite{duanmu-weiss}) the notion of the internal push-down of an internal measure.
Suppose $\Pi$ is an internal probability measure---or even just an internal probability charge, or as it is sometimes called, a finitely additive probability measure---on $\starmap\parametersp$.
Suppose further that $\mathcal F$ is a $\sigma$-algebra on $\parametersp$ such that
$\starmap{\mathcal F} \subseteq \dom(\Pi)$.
Then we can define a positive probability charge (finitely additive probability measure)
\[
\Pi^p \colon \mathcal F \to [0,1]
\]
by letting, for $X \in \mathcal F$,
\[
\Pi^p(X) = \ST\Pi(\starmap X)
\]
The probability charge $\Pi^p$ is called the \emph{internal push-down of $\Pi$}.

Using the above lemma, we can characterize admissibility using finitely additive priors, in problems where the set of decision procedures is compact and risk functions are bounded.
Note that these assumptions do not entail that the risk is totally bounded.

The following crucial lemma establishes a useful connection between integration with respect to $\Pi$ and with respect to $\Pi^p$.

\begin{lemma}[see 5.4 in \cite{duanmu-weiss}]
Suppose $(\parametersp, \mathcal F)$ is a measurable space, $f \colon \parametersp \to \reals$ is bounded and measurable, 
and $\Pi$ is an internal measure on $(\starmap\parametersp,\starmap{\mathcal F})$.
Then $\int_{\starmap \parametersp} \starmap f \;\mathrm{d}\Pi \approx  \int_{\parametersp} f \;\mathrm{d}\Pi^p$.
\end{lemma}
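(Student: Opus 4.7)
The plan is to prove the lemma by the standard three-stage bootstrap of measure theory, transported into the nonstandard setting: first establish the identity for indicators of sets in $\mathcal F$, then extend by linearity to simple measurable functions, and finally pass to an arbitrary bounded measurable $f$ via uniform approximation. Boundedness of $f$ and finiteness of $\Pi$ are what make the argument go through with no appeal to countable additivity, which is important since $\Pi^p$ is only a charge.

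For the base case, fix $X \in \mathcal F$ and put $f = 1_X$. By transfer, $\starmap f = 1_{\starmap X}$, so $\int_{\starmap \parametersp}\starmap f\,\mathrm d\Pi = \Pi(\starmap X)$, which lies in $\starmap{[0,1]}$ and is therefore finite. By the very definition of the push-down, $\Pi^p(X) = \ST \Pi(\starmap X)$, and hence $\int_{\starmap \parametersp}\starmap f\,\mathrm d\Pi \approx \Pi^p(X) = \int_\parametersp f\,\mathrm d\Pi^p$. Linearity of both integrals, together with the fact that the standard part is $\reals$-linear on the ring of finite hyperreals, extends this to every simple measurable $g = \sum_{i=1}^n c_i 1_{X_i}$ with $c_i \in \reals$ and $X_i \in \mathcal F$.

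For the general step, fix $\varepsilon \in \reals_{>0}$ and a bound $M \in \reals_{>0}$ with $|f|\leq M$. Partition $[-M,M]$ into finitely many intervals of length less than $\varepsilon$ and pull back through $f$ to obtain a simple $g$ with $\|f - g\|_\infty < \varepsilon$. By transfer, $|\starmap f(\tilde\theta) - \starmap g(\tilde\theta)| < \varepsilon$ for every $\tilde\theta \in \starmap\parametersp$, so monotonicity and linearity of the $\starmap{}$-integral (via transfer) give
\[
\left| \int_{\starmap\parametersp} \starmap f\,\mathrm d\Pi - \int_{\starmap\parametersp}\starmap g\,\mathrm d\Pi \right| \leq \varepsilon,
\]
and the analogous estimate with $\Pi^p$ in place of $\Pi$ holds for the charge integral. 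Combining these two bounds with the simple-function case for $g$ shows that
$\int_{\starmap\parametersp} \starmap f\,\mathrm d\Pi$ and $\int_\parametersp f\,\mathrm d\Pi^p$ differ by at most $2\varepsilon$ modulo an infinitesimal. Since $\varepsilon \in \reals_{>0}$ was arbitrary, the difference is infinitesimal, as required.

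The only real obstacle is bookkeeping rather than content: one has to be careful that each of the ingredients used on the $\starmap{}$-side (linearity, monotonicity, the uniform bound $|\starmap f - \starmap g|<\varepsilon$) is the transfer of a standard identity about integrals against probability measures, so that it applies to the internal object $\Pi$. Once that is in place, the boundedness of $f$ is precisely what rules out the pathologies that plague integration against merely finitely additive measures and ensures that all four quantities in the chain of approximations are finite.
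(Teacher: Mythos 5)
Your proof is correct, but there is nothing in the paper to compare it against: the lemma is imported verbatim from the literature (``see 5.4 in \cite{duanmu-weiss}'') and is used as a black box, so your argument fills a gap rather than paralleling an existing proof. The three-stage bootstrap you run (indicators, then simple functions via linearity of both integrals and of the standard-part map on limited hyperreals, then uniform approximation of a bounded measurable $f$) is the natural self-contained route, and each step is sound: the base case is exactly the definition $\Pi^p(X)=\ST\Pi(\starmap{X})$, and the approximation step needs only finite additivity and monotonicity of the charge integral on the standard side, plus transferred monotonicity and linearity on the internal side --- legitimately applied to $\Pi$, since those facts are quantifications over all measures on $(\parametersp,\mathcal F)$, exactly as you note. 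Two caveats you should make explicit. First, the lemma as stated says ``internal measure'', but your estimate $\lvert\int\starmap{f}\,\mathrm{d}\Pi-\int\starmap{g}\,\mathrm{d}\Pi\rvert\le\varepsilon$ silently takes $\Pi(\starmap{\parametersp})=1$; for the argument (and indeed for $\Pi^p$ to be a real-valued charge at all) one needs the total mass $\Pi(\starmap{\parametersp})$ to be a limited hyperreal, in which case your bounds become $\varepsilon\cdot\Pi(\starmap{\parametersp})$ and $\varepsilon\cdot\Pi^p(\parametersp)$, which is harmless and consistent with the paper's surrounding text, where $\Pi$ is an internal probability measure or charge. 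Second, the concluding ``$\varepsilon$ arbitrary'' step requires both integrals to be limited, which follows from $\lvert f\rvert\le M$ together with limited total mass and is worth one sentence. For comparison: a common alternative in the nonstandard literature factors the statement through Loeb measure (the internal integral of $\starmap{f}$ is infinitely close to the Loeb integral of $\ST\starmap{f}$, which one then identifies with the charge integral against the push-down); your route avoids Loeb theory entirely, which for bounded $f$ is a genuine economy.
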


\begin{theorem}\label{t.bounded.risk}
Suppose that $\decisionproblem$ is a decision problem such that  
\begin{enumerate}
\item For any $\delta \in \decisions$, the risk function $\riskf^\delta$ is bounded. 
\item There exists a topology with respect to which $\decisions$ is compact and 
$\riskf(\theta,\cdot)$ is continuous for each $\theta\in\parametersp$.
\end{enumerate}
Then the following are equivalent:
\begin{enumerate}
\item $\delta_0$ is admissible among $\decisions$.
\item $\delta_0$ satisfies the finitely additive Stein's condition.
\end{enumerate}
\end{theorem}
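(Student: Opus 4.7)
The plan is to mirror Theorem~\ref{t.adm}, items~(\ref{i.stein})$\Leftrightarrow$(\ref{i.adm}), translating between hyperpriors and finitely additive priors via the internal push-down construction. The two hypotheses of the theorem---boundedness of each $\riskf^\delta$ and compactness of $\decisions$ with $\riskf(\theta,\cdot)$ continuous---are exactly what is needed to invoke the push-down lemma quoted above and to control the passage between standard and nonstandard infima.

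For the direction $(2)\Rightarrow(1)$, I would replay the standard Stein-style argument mirroring $(\ref{i.stein.B})\Rightarrow(\ref{i.adm})$ in the proof of Theorem~\ref{t.adm}. Suppose for contradiction that some $\delta \in \decisions$ dominates $\delta_0$; then there are $\theta_0 \in \parametersp$ and $\eta > 0$ with $\riskf(\theta,\delta) \leq \riskf(\theta,\delta_0)$ for every $\theta$ and $\riskf(\theta_0,\delta_0) - \riskf(\theta_0,\delta) \geq \eta$. Apply the finitely additive Stein's condition at $(\theta_0,\eta/2)$ to obtain a finitely additive prior $\pi$ with $\pi(\{\theta_0\}) > 0$ and $\riskf(\pi,\delta_0) - \inf_\delta \riskf(\pi,\delta) \leq \pi(\{\theta_0\})\eta/2$. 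Integrating the pointwise domination against $\pi$ yields $\riskf(\pi,\delta_0) - \riskf(\pi,\delta) \geq \pi(\{\theta_0\})\eta$, contradicting the Stein bound.

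For the converse direction $(1)\Rightarrow(2)$, suppose $\delta_0$ is admissible. Given $(\theta_0,\varepsilon)$, apply Theorem~\ref{t.adm}, item~(\ref{i.stein}), to obtain a hyperprior $\Pi \in \starmap\priors$ with $\Pi(\{\starmap\theta_0\}) > 0$ witnessing the nonstandard Stein inequality for the singleton $\{\theta_0\}$. Let $\pi := \Pi^p$, a finitely additive prior on $\parametersp$. By boundedness of $\riskf^\delta$ and the push-down lemma, $\starmap\riskf(\Pi,\starmap\delta) \approx \riskf(\pi,\delta)$ for every standard $\delta \in \decisions$; since $\inf_{\Delta \in \sigmamap\decisions}\starmap\riskf(\Pi,\Delta) \leq \starmap\riskf(\Pi,\starmap\delta)$ for each $\delta$, taking standard parts gives $\ST\!\bigl[\inf_{\Delta \in \sigmamap\decisions}\starmap\riskf(\Pi,\Delta)\bigr] \leq \inf_{\delta \in \decisions} \riskf(\pi,\delta)$, so the standard part of the nonstandard Stein inequality produces the standard counterpart $\riskf(\pi,\delta_0) - \inf_\delta \riskf(\pi,\delta) \leq \pi(\{\theta_0\})\varepsilon$.

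The main obstacle is ensuring the strict positivity $\pi(\{\theta_0\}) > 0$: if $\Pi(\{\starmap\theta_0\})$ happens to be a positive infinitesimal then the push-down mass at $\theta_0$ vanishes. My intended remedy is to replace $\Pi$ by $\Pi_s := (1-s)\Pi + s\,\dirac{\starmap\theta_0}$ for a carefully chosen standard $s > 0$, so that $(\Pi_s)^p(\{\theta_0\}) \geq s$; the additional Bayes regret introduced is at most $s \cdot R(\theta_0)$, where $R(\theta_0) = \riskf(\theta_0,\delta_0) - \inf_\delta \riskf(\theta_0,\delta)$ is finite by the bounded-risk hypothesis and is attained by some $\delta^*(\theta_0) \in \decisions$ thanks to compactness of $\decisions$ and continuity of $\riskf(\theta_0,\cdot)$. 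Balancing $s$ against $\varepsilon$ and $R(\theta_0)$ to preserve the Stein inequality is routine when $R(\theta_0) \leq \varepsilon$; the delicate case $R(\theta_0) > \varepsilon$ is the crux of the proof, where compactness must be exploited more subtly by replacing the Dirac perturbation with a mixing prior supported on parameters at which $\delta_0$ outperforms $\delta^*(\theta_0)$---such parameters are guaranteed to exist by admissibility of $\delta_0$.
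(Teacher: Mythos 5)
Your direction $(2)\Rightarrow(1)$ is fine and agrees with the paper, which simply notes that ``the usual proof goes through.'' The problem is $(1)\Rightarrow(2)$. You correctly identify the obstacle---that a witness $\Pi$ from Theorem~\ref{t.adm}, item~(\ref{i.stein}), may put only infinitesimal mass on $\starmap\theta_0$, so its push-down satisfies $\Pi^p(\{\theta_0\})=0$ and the finitely additive Stein inequality becomes vacuous---but your remedy does not close the gap. As you compute, the Dirac perturbation $\Pi_s=(1-s)\Pi+s\,\dirac{\starmap\theta_0}$ only works when $R(\theta_0)\leq\varepsilon$, and in the remaining case you offer only the gesture of ``a mixing prior supported on parameters at which $\delta_0$ outperforms $\delta^*(\theta_0)$.'' This cannot work as stated, because the quantity to be controlled is $\starmap\riskf(\Pi_s,\starmap{\delta_0})-\inf_{\Delta\in\sigmamap\decisions}\starmap\riskf(\Pi_s,\Delta)$, an infimum over \emph{all} standard procedures: compensating against the single minimizer $\delta^*(\theta_0)$ leaves every other near-minimizing procedure uncontrolled, and the set of parameters where $\delta_0$ beats a competitor varies with the competitor. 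What is needed is a single prior that simultaneously neutralizes every $\delta$ that improves on $\delta_0$ at $\theta_0$ by more than $\varepsilon$; producing it is a genuine minimax problem, not a perturbation argument, and no choice of standard $s$ applied post hoc to an arbitrary Stein witness will solve it.

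The paper's proof does not start from Theorem~\ref{t.adm} at all; it builds the hyperprior from scratch so that the mass at $\theta_0$ is non-infinitesimal \emph{by construction}. Concretely: the sets $U_0=\{\delta\,:\,\riskf(\theta_0,\delta)>\riskf(\theta_0,\delta_0)-\varepsilon\}$ and $U_{\varepsilon',\theta}=\{\delta\,:\,\riskf(\theta,\delta)>\riskf(\theta,\delta_0)+\varepsilon'\}$ cover the compact space $\decisions$ (this is where admissibility enters), a finite subcover yields finitely many pairs $(\theta_i,\varepsilon_i)$, and boundedness of the risk yields a \emph{standard} constant $\gamma\geq 0$ bounding the trade-off ratios in \eqref{e.gamma}. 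Transferring Wald's minimax theorem to the hyperfinite auxiliary game with payoff $\starmap\riskf_{\starmap\delta_0}(\starmap\theta_0,\Delta)+\gamma\cdot\starmap\riskf_{\starmap\delta_0}(\tilde\theta,\Delta)$ then produces a hyperprior $\Pi$ with
\[
(\forall\Delta\in\tilde\decisions)\quad \starmap\riskf_{\starmap\delta_0}(\starmap\theta_0,\Delta)+\gamma\cdot\starmap\riskf_{\starmap\delta_0}(\Pi,\Delta)\geq-\tfrac{3\varepsilon}{2},
\]
so that the mixture $\bar\Pi=\frac{1}{\gamma+1}\starmap{\dirac{\theta_0}}+\frac{\gamma}{\gamma+1}\Pi$ carries mass at least $\frac{1}{1+\gamma}$---standard, hence surviving the push-down---at $\starmap\theta_0$, while satisfying the Stein inequality against all of $\sigmamap\decisions$ uniformly. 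Only after this does the push-down lemma enter, exactly as in your sketch. In short: the game-theoretic step (finite subcover, the constant $\gamma$, and the transferred minimax theorem) is the mathematical content of the theorem, and it is precisely the piece your proposal leaves open.
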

\begin{proof}
We show (1)$\Rightarrow$(2) (in the other direction, the usual proof goes through).

Let a problem and $\delta_0$ be as in the theorem and also $\theta_0\in \parametersp$ and $\varepsilon \in \reals_{>0}$ be given.
Let
\begin{align*}
U_0 &:= \{\delta \in \decisions \setdef \riskf(\theta_0,\delta) > \riskf(\theta_0,\delta_0) - \varepsilon\},\\
U_{\varepsilon', \theta} &:= \{\delta \in \decisions \setdef \riskf(\theta,\delta) > \riskf(\theta,\delta_0) + \varepsilon'\}
\end{align*}
for $\theta\in \parametersp$ and $\varepsilon' \in \reals^{>0}$.
Since $\delta_0$ is admissible in $\decisions$, for any $\delta \in \decisions$ if $\delta \notin U_0$, there is $\theta \in \parametersp$ and $\varepsilon' \in \reals^{>0}$ such that $\delta \in U_{\varepsilon', \theta}$. 
That is, $\{U_0\}\cup\{U_{\varepsilon', \theta}\setdef \theta \in \parametersp, \varepsilon' \in \reals^{>0} \}$ covers $\decisions$ and therefore we can find a finite subcover 
\[
U_0 ,U_{\varepsilon_1, \theta_1}, \hdots U_{\varepsilon_n, \theta_n}.
\]
For each $\delta \in \decisions\setminus U_0$ by construction there exists $i$ such that $r_{\delta_0}(\theta_i,\delta)> \varepsilon_i$. 
We can find $\gamma \in \reals^{\geq 0}$ such that for all $\delta \in \decisions\setminus U_0$,
\begin{equation}\label{e.gamma}
\big(\exists i \in\{1,\hdots,n\}\big)\; \left[ r_{\delta_0}(\theta_i,\delta)\neq 0 \land \gamma \geq - \frac{r_{\delta_0}(\theta_0,\delta)+\varepsilon}{r_{\delta_0}(\theta_i,\delta)} \right]
\end{equation}
Namely let 
\begin{multline*}
\gamma := 
\sup_{\delta\in\decisions\setminus U_0} \min \left\{ 
- 
\frac{r_{\delta_0}(\theta_0,\delta)+\varepsilon}
{r_{\delta_0}(\theta_i,\delta)}  \setdef i \in\{1,\hdots,n\}, r_{\delta_0}(\theta_i,\delta) \neq 0 \right\}
=\frac{-\left(\min_{\delta \in\decisions} r_{\delta_0}(\theta_0,\delta) + \varepsilon\right)}
{\min \big\{\varepsilon_i \setdef i \in\{1,\hdots,n\}\big\}}
\end{multline*}
where the second equality holds because $\delta \in U_{\varepsilon_1, \theta_1}\cup \hdots \cup U_{\varepsilon_n, \theta_n}$ and because $\delta\notin U_0$ entails $r_{\delta_0}(\theta_0,\delta) +\varepsilon\leq 0$, whence the numerator of the last fraction is non-negative.

Observe that also for $\delta_0 \in U_0$, \eqref{e.gamma} obtains as well: In this case, as $r_{\delta_0}(\theta_0,\delta)+\varepsilon > 0$, the entire expression on the
right of the inequality in \eqref{e.gamma} will be negative if  $r_{\delta_0}(\theta_i,\delta)>0$, and therefore \eqref{e.gamma} obtains since $\gamma \geq 0$. 

Since $\delta_0$ is admissible in $\decisions$, by saturation and the proof of Theorem~\ref{t.hyperprior.admissibility} we can find hyperfinite sets $\tilde \parametersp \subseteq \starmap\parametersp$ and $\tilde\decisions\subseteq \starmap\decisions$ such that $\{\starmap\theta_1, \hdots, \starmap\theta_n\} \subseteq \tilde\parametersp$ and 
$\sigmamap U_0 \subseteq \tilde \decisions$ and for every $\Delta \in \tilde \decisions$ there exists $\tilde \theta \in \tilde \parametersp$ such that 
$\starmap{r}_{\delta_0}(\tilde\theta,\Delta)>0$.

By the transfer principle \eqref{e.gamma} holds for every $\Delta \in \starmap{\decisions}$. 
In particular, we have:
\begin{equation}\label{e.value}
(\forall \Delta \in \tilde \decisions)(\exists \tilde \theta \in \tilde\parametersp) \; \starmap\riskf_{\starmap\delta_0}(\starmap\theta_0,\Delta) + \gamma \cdot \starmap\riskf_{\starmap\delta_0}(\tilde\theta,\Delta) \geq -\varepsilon
\end{equation}
Consider the internal decision problem $\tilde\aproblem=(\tilde\parametersp, \sconvexhull{\tilde\decisions}, \starmap{\actionsp}, \tilde\riskf, \bar\priors)$ where the convex risk function is given by 
\[
\tilde\riskf(\tilde\theta,\Delta) := \starmap\riskf_{\starmap\delta_0}(\starmap\theta_0,\Delta) + \gamma \cdot \starmap\riskf_{\starmap\delta_0}(\tilde\theta,\Delta),
\]
for $\tilde\theta\in \tilde\parametersp$ and $\Delta \in \tilde\decisions$, 
and where $\tilde\priors$ is the set of internal (hyperfinite) probability measures on $\tilde\parametersp$.
Note that $\tilde\decisions$ and $\tilde\parametersp$ are hyperfinite, and moreover therefore $\tilde\riskf$ is bounded. 
By \eqref{e.value} and the transfer of \cite[Theorem~2.1, p.~37]{wald1971} the internal decision problem $\tilde\aproblem$ gives rise to a determined game with value at least $-\varepsilon$ and so we may find a $\Pi\in\tilde\priors$
 such that 
\begin{equation}\label{e.pre.ns.Stein}
(\forall \Delta \in \tilde \decisions)\: \starmap\riskf_{\starmap\delta_0}(\theta_0,\Delta) + \gamma \cdot \starmap\riskf_{\starmap\delta_0}(\Pi,\Delta) \geq -\frac{3\varepsilon}{2}
\end{equation}
By assumption, $\Pi \in \starmap \priors$.
Letting
\[
\bar\Pi := \frac{1}{\gamma+1} \starmap{\dirac{\theta_0}} + \frac{\gamma}{\gamma+1} \Pi,
\]
and dividing by $1+\gamma$ in both sides of the inequality in \eqref{e.pre.ns.Stein} and restricting the first quantifier to $\sigmamap\decisions$, 
it follows that
\[
(\forall \Delta \in \sigmamap\decisions)\; \starmap\riskf_{\starmap\delta_0}(\bar\Pi,\Delta)  \geq -\frac{1}{\gamma+1}\frac{3\varepsilon}{2}
\]
and moreover, the same holds for (the star image of) the internal push-down measure $({\bar\Pi})^p$. 
Since ${\bar\Pi}^p(\{\theta_0\}) = \ST\big({\bar\Pi}(\{\theta_0\})) \geq \frac{1}{1+\gamma}$, we have 
\[
(\forall \delta \in \decisions)\; \riskf_{\delta_0}({\bar\Pi}^p,\delta)  \geq -{\bar\Pi}^p(\{\delta_0\})\cdot \frac{3}{2} \cdot \varepsilon
\]
or equivalently,
\[
\riskf({\bar\Pi}^p,\delta_0)  - \inf_{\delta\in\decisions}\riskf({\bar\Pi}^p,\delta)  \leq {\bar\Pi}^p(\{\delta_0\})\cdot \frac{3}{2} \cdot \varepsilon
\]
which (the additional factor of $\frac32$ being irrelevant) proves the theorem.
\end{proof}

\bibliography{admissibility}{}
\bibliographystyle{amsplain}

\vfill

\end{document}